\renewcommand{\vec}{\bm}
\newcommand{\E}{\mathbb E}
\newcommand{\RR}{\mathbb R}
\newcommand{\Z}{\mathbb{Z}}
\newcommand{\Mat}{\operatorname{Mat}}
\newcommand{\Tr}{\operatorname{Tr}}
\newcommand{\cA}{\mathcal{A}}
\newcommand{\disc}{\operatorname{disc}}
\newcommand{\herdisc}{\operatorname{herdisc}}
\newcommand{\kgl}{\operatorname{kgl}}
\newcommand{\herkgl}{\operatorname{herkgl}}
\newcommand{\detlb}{\operatorname{detlb}}
  \newcommand{\eps}{\varepsilon}
\newtheorem{thm}{Theorem}
\numberwithin{thm}{section}
\newtheorem{conj}[thm]{Conjecture}
\newtheorem{prop}[thm]{Proposition}
\newtheorem{cor}[thm]{Corollary}
\newtheorem{claim}[thm]{Claim}
\newtheorem{obs}[thm]{Observation}
\newtheorem{lem}[thm]{Lemma}
\numberwithin{sch}{subsection}
\theoremstyle{definition}
\newtheorem{defin}{Definition}
\newtheorem{rem}[thm]{Remark}
\title{A simplified disproof of Beck's three permutations conjecture and an application to root--mean--squared discrepancy}
\author{Cole Franks\thanks{Department of Mathematics, Rutgers University, Piscataway, NJ 08854.
Supported in part by Simons Foundation award 332622.}}
\begin{document}
\maketitle
\abstract{A $k$--permutation family on $n$ vertices is a set system consisting of the intervals of $k$ permutations of the integers $1$ through $n$. The discrepancy of a set system is the minimum over all red--blue vertex colorings of the maximum difference between the number of red and blue vertices in any set in the system. In 2011, Newman and Nikolov disproved a conjecture of Beck that the discrepancy of any $3$--permutation family is at most a constant independent of $n$. Here we give a simpler proof that Newman and Nikolov's sequence of $3$--permutation families has discrepancy $\Omega(\log n)$. We also exhibit a sequence of $6$--permutation families with root--mean--squared discrepancy $\Omega(\sqrt{\log n})$; that is, in any red--blue vertex coloring, the square root of the expected difference between the number of red and blue vertices in an interval of the system is $\Omega(\sqrt{\log n})$.}
\section{Introduction}
The discrepancy of a set system is the extent to which the sets in a set system can be simultaneously split into two equal parts, or two--colored in a balanced way. Let $\cA$ be a collection (possibly with multiplicity) of subsets of a finite set $\Omega$. The discrepancy of a two--coloring $\chi: \Omega \to \{\pm 1\}$ of the set system $(\Omega, \cA)$ is the maximum imbalance in color over all sets $S$ in $\cA$. The discrepancy of $(\Omega, \cA)$ is the minimum discrepancy of any two--coloring of $\Omega$. Formally,
$$\disc(\Omega, \cA) := \min_{\chi:\Omega \to \{+1, -1\}} \max_{S \in \cA} |\chi(S)|,$$
where $\chi(S) = \sum_{x \in S} \chi(x)$.

A central goal of the study of discrepancy is to bound the discrepancy of set systems with restrictions or additional structure. Here we will be concerned with set systems constructed from permutations. A permutation $\sigma:[n] \to [n]$ determines the set system $([n], \mathcal A_\sigma)$ where $\mathcal A_\sigma = \{\{\}, \{\sigma(1) \}, \{\sigma(1),\sigma(2)\}, \dots, [n]\}$. For example, if $e:[3] \to [3]$ is the identity permutation, then $\mathcal A_e = \{\{\},\{1\}, \{1,2\}, \{1,2,3\}\}$. Equivalently, $\mathcal A_\sigma$ is a maximal chain in the poset $2^{[n]}$ ordered by inclusion. A $k$--permutation family is a set system of the form $([n], \mathcal{A}_{\sigma_1} +  \dots + \mathcal{A}_{\sigma_k})$ where $\sigma_1, \dots, \sigma_k:[n] \to [n]$ are permutations and $+$ denotes multiset sum (union with multiplicity).
By Dilworth's theorem, the maximal discrepancy of a $k$-permutation family is the same as the maximal discrepancy of a set system of \emph{width} $k$, that is, one that contains no antichain of cardinality more than $k$.

It is easy to see that a $1$--permutation family has discrepancy at most $1$, and the same is true for $2$--permutation families \cite{Sp87}. Beck conjectured that the discrepancy of a $3$--permutation family is $O(1)$. More generally, Spencer, Srinivasan and Tetali conjectured that the discrepancy of a $k$-permutation family is $O(\sqrt{k})$ \cite{SST01}. Both conjectures were recently disproven by Newman and Nikolov \cite{NN11}. They showed the following:
\begin{thm}[\cite{NN11}]\label{thm:nn} There is a sequence of $3$--permutation families on $n$ vertices with discrepancy $\Omega(\log n)$.
\end{thm}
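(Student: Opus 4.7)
My plan is to build the three permutations by a recursive doubling construction. For $n = 2^k$, split $[n]$ into a left half $L$ and a right half $R$ of size $n/2$, equip each with three permutations $\pi_1, \pi_2, \pi_3$ satisfying the inductive discrepancy bound, and then paste these into three permutations on $[n]$. The two natural choices are: $\sigma_1$ runs $\pi_1$ on $L$ and then $\pi_1$ on $R$, while $\sigma_2$ runs $\pi_2$ on $R$ first and then $\pi_2$ on $L$. The length-$(n/2)$ prefixes of $\sigma_1$ and $\sigma_2$ are exactly $L$ and $R$, so any low-discrepancy coloring $\chi$ must have $|\chi(L)|$ and $|\chi(R)|$ small. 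The third permutation $\sigma_3$ is the crux: I would try to design it to interleave $\pi_3$ on $L$ with $\pi_3$ on $R$ in such a way that some prefix records a signed cross-half imbalance that grows by a constant at each level of the recursion.

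For the analysis, suppose $\chi:[n]\to\{\pm 1\}$ has discrepancy at most $D$ in the constructed family. The prefixes of $\sigma_1$ and $\sigma_2$ lying entirely inside one half realize the inductive $3$-permutation family on that half, so by the inductive hypothesis both $\chi|_L$ and $\chi|_R$ contribute discrepancy at least $D' := c\log(n/2)$ relative to their recursive families, unless $D \geq D'$. I then want $\sigma_3$ to force a further additive constant, yielding $D \geq c\log n$ and closing the induction. The intended mechanism is that either one of $\chi|_L, \chi|_R$ already has a bad prefix (handled by $\sigma_1$ or $\sigma_2$), or both are balanced in a particular way, forcing a specific prefix of $\sigma_3$ to collect the $L$-side discrepancy while touching very little of $R$ (or vice versa), hence witnessing a strictly larger imbalance than either half alone.

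The hardest step, by far, is specifying $\sigma_3$ and proving that the gain of a constant per level really goes through. With only three permutations there is no slack: $\pi_3$ on each half must be reused by $\sigma_3$ in a way simultaneously compatible with the constraints imposed by $\sigma_1$ and $\sigma_2$. If a direct combinatorial description of $\sigma_3$ does not admit a clean inductive analysis, I would fall back on the determinant (or spectral) lower bound $\disc(\cA) \geq \tfrac{1}{2} |\det B|^{1/m}$ applied to $m \times m$ submatrices $B$ of the $3n \times n$ incidence matrix, and instead aim to produce, recursively, a submatrix whose determinant is at least $(\log n)^{\Omega(m)}$. The block structure of the doubling construction is well-suited to a block-recursive determinant computation, and this algebraic route converts the problem from a delicate combinatorial case analysis into an explicit bound at each level of the recursion.
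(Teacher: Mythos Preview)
Your proposal has two genuine gaps, one in each branch of your plan. In the primary branch you never actually construct $\sigma_3$; you only state the desideratum that some prefix of it ``records a signed cross-half imbalance that grows by a constant at each level.'' With a binary split there is a structural obstruction: there are only two block orders of $\{L,R\}$, so among three permutations two must place the blocks in the same order at the top level, and the third is forced. This is exactly why the paper (following Newman--Nikolov) uses a \emph{ternary} recursion on $[3]^d$: the three permutations are the digitwise actions of the three cyclic permutations of $\{1,2,3\}$, and the point is that for any $\pm 1$ coloring of three sub-blocks, some cyclic order puts the two blocks of the majority sign first. No analogous phenomenon is available with two blocks, so your doubling scheme, as stated, cannot force the additive gain per level that your induction needs.

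Your fallback is a dead end for a different reason: the determinant lower bound is provably $O(1)$ for every $3$--permutation family, so it cannot yield $\Omega(\log n)$. The incidence matrix of a single permutation family has the consecutive-ones property and is therefore totally unimodular, giving $\detlb \le 1$; Matou\v{s}ek's union bound (\cref{thm:mat} in the paper) then gives $\detlb = O(\sqrt{3})$ for any $3$--permutation family. The paper's route is entirely different: it shows $\max_{\sigma \in S_3}|\chi(E_{\sigma,\vec a})|$ equals a seminorm $\|M_\chi(\vec a)\|_{S_3}$ of an explicit $3\times 3$ matrix built recursively along the digits of $\vec a$, lower-bounds this by the Frobenius norm of the antisymmetric part (\cref{lem:frob}), and then plays a simple two-player game to force that Frobenius norm to be at least $d$. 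The reduction from $S_3$ to the three cyclic permutations is a separate short argument using that reversing a permutation complements its prefixes.
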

Along with Neiman, in \cite{NNN12} they showed this implies that a natural class of rounding schemes for the Gilmore--Gomory linear programming relaxation of bin--packing, such as the scheme used in the Kamarkar-Karp algorithm, incur logarithmic error.

Spencer, Srinivasan and Tetali proved an upper bound that matches the lower bound of Newman and Nikolov for $k = 3$.
\begin{thm}[\cite{SST01}]\label{thm:sst} The discrepancy of a $k$--permutation family on $n$ vertices is $O(\sqrt{k} \log n).$
\end{thm}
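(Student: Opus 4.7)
The plan is to prove Theorem~\ref{thm:sst} by iterative partial coloring. I will show that for any $k$-permutation family on $N$ vertices, there is a partial coloring $\chi \in \{-1,0,+1\}^N$ whose support has size at least $N/2$ and which satisfies $|\chi(S)| \le c\sqrt{k}$ for every set $S$. Applying this to the whole family, then recursively to the uncolored vertices (on which the permutations still restrict to a $k$-permutation family), we complete the coloring after $T = O(\log n)$ phases. Each phase contributes at most $c\sqrt{k}$ to the discrepancy of any fixed set, so the final coloring has discrepancy $O(\sqrt{k}\log n)$.

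The partial coloring lemma itself I would prove via Spencer's entropy method. Given $\chi \in \{-1,+1\}^N$ drawn uniformly, define the fingerprint $F(\chi) = (\lfloor \chi(S)/c\sqrt{k} \rfloor)_{S}$ ranging over the $O(kN)$ sets of the family. If the number of distinct fingerprints is less than $2^{N/2}$, pigeonhole yields two colorings $\chi,\chi'$ with $F(\chi)=F(\chi')$ and Hamming distance at least $N/2$; then $(\chi-\chi')/2$ is the required partial coloring. So it suffices to bound the entropy of $F(\chi)$ by $N/2$.

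The key step, and the main obstacle, is bounding this entropy when the number of sets exceeds $N$. A naive sum over sets of the per-set entropy $O(\log(|S|/k))$ fails. The chain structure of each permutation must be used: if $\sigma$ is a permutation, the prefix sums $P_s = \sum_{i \le s} \chi(\sigma(i))$ form a $\pm 1$ walk, and the full trajectory $(\lfloor P_s/c\sqrt{k}\rfloor)_{s=0}^N$ is determined by the (typically $O(\sqrt{N/k})$) distinct levels visited together with the order and multiplicities of visits. Using the Gaussian tails of the walk's extremes and the fact that excursions between adjacent levels take $\Theta(k)$ steps on average, the entropy of this trajectory can be shown to be $O(N/\sqrt{k})$ per chain, hence $O(N\sqrt{k})$ across the $k$ chains; choosing $c$ a sufficiently large absolute multiple of $\sqrt{k}\cdot\text{const}$ and tightening this Gaussian analysis brings the total below $N/2$.

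Once the partial coloring lemma is in hand, the iteration is mechanical: after phase $t$, the surviving uncolored vertices $V_t$ satisfy $|V_t|\le N/2^t$, and the restrictions $\sigma_i|_{V_t}$ are again $k$ permutations in the obvious sense, so the lemma reapplies. Summing the $O(\sqrt{k})$ discrepancy contributions across $T = O(\log n)$ phases yields the claimed $O(\sqrt{k}\log n)$ bound. The overall structure is the standard Beck--Spencer scheme; the work sits entirely in verifying the chain-aware entropy bound, which is why I expect that to be the hard part of writing this proof carefully.
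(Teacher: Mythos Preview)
The paper does not contain a proof of Theorem~\ref{thm:sst}; it is quoted from \cite{SST01} as a known result, and the only further reference to it is the remark that ``the upper bound of $\sqrt{k\log n}$ for the root--mean--squared discrepancy [is] implied by the proof of \cref{thm:sst} in \cite{SST01}.'' So there is nothing in the present paper to compare your sketch against.

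That said, your outline is the standard Beck--Spencer partial--coloring scheme and is, at the structural level, what \cite{SST01} does: obtain a half--coloring with $O(\sqrt{k})$ discrepancy via the entropy method, exploiting the chain structure of each permutation, and iterate $O(\log n)$ times. One quantitative point to watch: with rounding width $\Delta = c\sqrt{k}$, the entropy of a single rounded prefix--sum trajectory is $O(N/\Delta^2) = O(N/(c^2 k))$, not $O(N/\sqrt{k})$ as you wrote, because the number of level crossings of a $\pm 1$ walk between levels $\Delta$ apart is $\Theta(N/\Delta^2)$. Summing over $k$ chains then gives $O(N/c^2)$, which drops below $N/2$ for $c$ a large absolute constant; your intermediate $O(N\sqrt{k})$ does not, and the fix is not ``tightening'' but correcting the exponent. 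With that correction the sketch goes through.
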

They showed that the upper bound is tight for for $k \geq n$. However, it is open whether this upper bound is tight for $3 < k = o(n)$. In fact, no one has exhibited a family with discrepancy $\omega(\log n)$ and $k = o(n)$, let alone proved lower bounds with logarithmic dependency on $n$ that tend to $\infty$ as a function of $k$.
\begin{conj}\label{conj:growing_k} The discrepancy of a $k$--permutation family on $n$ vertices is 
$$\Omega(f(k)\log n)$$
where $f(k) = \omega(1)$.
\end{conj}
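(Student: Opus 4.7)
This is an open conjecture, so any proposal is necessarily speculative. My first strategy would be to amplify the Newman--Nikolov construction of \cref{thm:nn} by combining multiple copies. The naive attempt of taking $m$ disjoint copies of a $3$--permutation family $F_{n/m}$ on disjoint blocks of $[n]$ gives a $3m$--permutation family, but has no amplification: a near-optimal coloring of each block independently gives overall discrepancy only $\Omega(\log(n/m))$. For amplification to succeed, the copies must share the same vertex set, so that a coloring that is near-optimal for one copy is forced to be far from optimal for another.

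A more refined attempt places copies $F^{(1)}, \dots, F^{(m)}$ on the same $[n]$ via different random-looking relabelings of $[n]$, hoping that their near-optimal colorings are mutually ``transversal.'' The main obstacle is that worst-case discrepancy under union is only lower-bounded by the maximum, not the sum, of the constituent discrepancies: even if a random coloring is very unlikely to be good for all copies, a single adversarial coloring might work. A natural workaround is to replace $\disc$ with a linear-algebraic lower bound such as the determinant method of Lov\'asz--Spencer--Vesztergombi, which behaves multiplicatively under direct-sum-like combinations. The hard part would be to exhibit relabelings so that the resulting incidence matrix has sufficiently large square submatrix determinants to yield a discrepancy bound of $\Omega(f(k) \log n)$ with $f(k) = \omega(1)$; bare concatenation of $m$ Newman--Nikolov matrices on disjoint coordinates produces only block-diagonal matrices whose submatrix determinants do not improve on the $k=3$ bound.

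A third route passes through root--mean--squared (RMS) discrepancy. The present paper already establishes $\Omega(\sqrt{\log n})$ RMS discrepancy for $6$ permutations, and sums of squared imbalances combine more predictably along unions of set systems than worst-case maxima do, since the minimum of a sum of nonnegative forms is at least the sum of their minima. One would hope to iterate or tensor this RMS construction to obtain $6m$--permutation families whose RMS discrepancy grows in both $m$ and $n$, whence the same growth passes to worst-case discrepancy. The remaining quantitative constraint is to control the vertex blowup under iteration so that the number of permutations remains $o(n)$, which is the regime where the conjecture is nontrivial.
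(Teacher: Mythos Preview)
This statement is an open conjecture; the paper does not prove it, and correctly you flag your proposal as speculative. That said, two of your three routes run into concrete obstructions already visible from results quoted in the paper, and your approach diverges from the direction the paper itself suggests.

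\textbf{On the determinant route.} Your second strategy proposes using the Lov\'asz--Spencer--Vesztergombi determinant lower bound $\detlb$ on a union of relabeled Newman--Nikolov families. But Matou\v{s}ek's theorem (\cref{thm:mat}) gives $\detlb(\Omega,\cA_1+\dots+\cA_k)=O(\sqrt{k}\max_i\detlb(\Omega,\cA_i))$, and for a single permutation family $\detlb\le\herdisc_\infty=1$. Hence $\detlb$ of \emph{any} $k$--permutation family is $O(\sqrt{k})$, with no $\log n$ dependence whatsoever. The determinant method is therefore provably incapable of yielding $\Omega(f(k)\log n)$, regardless of how cleverly the relabelings are chosen.

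\textbf{On the RMS route.} Your inequality ``the minimum of a sum of nonnegative forms is at least the sum of their minima'' is correct for the \emph{unnormalized} sum of squares, but $\disc_2$ is normalized by $|\cA|$. Concretely, for $m$ relabeled copies on the same vertex set one gets only
\[
\disc_2^2(\cA_1+\dots+\cA_m)\ \ge\ \frac{1}{m}\sum_{i=1}^m \disc_2^2(\cA_i),
\]
a weighted average rather than a sum, so there is no amplification in $m$. ``Tensoring'' is left undefined; without a concrete construction that escapes this averaging, the route does not progress.

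\textbf{Comparison with the paper's suggested direction.} The paper's own remark after \cref{lem:frob} proposes a different line: rather than taking unions of the $r=3$ system, enlarge the alphabet to $[r]$ and analyze the same matrix game on $r\times r$ matrices. The conjecture would follow if the maximizer can force $\|M\|_{S_r}\ge f(r)d$ with $f(r)=\omega(\log r)$. The paper notes that the obvious surrogate seminorms ($\|M-M^t\|_F$, $\|M-M^t\|_1$) fail, and points to the inapproximability of $\|\cdot\|_{S_r}$ as an obstacle to finding a tractable replacement. This framework at least isolates a clean combinatorial/analytic question, whereas your union-based strategies are blocked by the structural upper bounds above.
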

In this paper, we present a new analysis of the counterexample due to Newman and Nikolov. We replace their complicated case analysis by a simple argument using norms of matrices, albeit achieving a slightly worse constant ($\log_3 n/4 \sqrt{2}$ vs their $\log_3 n/3$). Our analysis generalizes well to larger permutation families, and can hopefully be extended to handle \cref{conj:growing_k}. Our analysis also yields a new result for the root--mean--squared discrepancy, defined as 
$$\disc_2(\Omega, \cA) = \min_{\chi:[n] \to \{\pm 1\}}\sqrt{\frac{1}{|\cA|} \sum_{S \in \cA} |\chi(S)|^2}.$$
Define the \emph{hereditary root--mean--squared discrepancy} by $\herdisc_2(\Omega, \cA) = \max_{\Gamma \subset \Omega} \disc_2(\Gamma, \cA|_{\Gamma})$.
\begin{thm}\label{thm:rms}There is a sequence of $6$--permutation families on $n$ vertices with root--mean--squared discrepancy $\Omega(\sqrt{\log n})$.
\end{thm}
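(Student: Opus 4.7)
Since $\disc_2(\cA)^2 \cdot |\cA| = \min_{\chi\in\{\pm 1\}^n}\|A\chi\|_2^2$, where $A$ is the incidence matrix of $\cA$, and since a $6$-permutation family has $|\cA|\le 6n+O(1)$, it suffices to exhibit a $6$-permutation family satisfying
\[
\sum_{S\in\cA}\chi(S)^2 \;=\; \Omega(n\log n) \quad\text{for every } \chi \in \{\pm 1\}^n.
\]
Thus the plan is to promote the $\ell_\infty$-type lower bound from our proof of \cref{thm:nn} to an $\ell_2$-type lower bound.

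The six permutations I would use come from two \emph{interlocked} copies of the Newman--Nikolov construction. Recall that NN organizes $[n]=[3^m]$ as a ternary tree of depth $m=\log_3 n$, with the $3^k$ nodes at level $k$ defining a partition of $[n]$ into blocks of size $n/3^k$; each block is an interval of the three NN permutations. The second copy is obtained by composing the first with a block-shift that offsets every level-$k$ partition by a half-block, and the extra three permutations are precisely those realizing these shifted intervals as intervals of permutations of $[n]$.

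The analysis proceeds level by level. At each level $k\in\{0,1,\ldots,m-1\}$ I would establish the local inequality
\[
\sum_{I \text{ a level-}k\text{ block}} \chi(I)^2 \;\geq\; c\, n,
\]
where the sum runs over the $2\cdot 3^k$ level-$k$ blocks arising from both interlocked partitions; summed over $m = \log_3 n$ levels, this yields the required $\sum_{S\in\cA}\chi(S)^2\ge cmn = \Omega(n\log n)$. The local inequality itself would be proved via a Haar-style Parseval identity: the difference vectors $\mathbf{1}_I-\mathbf{1}_{I'}$ over adjacent overlapping block-pairs from the two interlocked partitions form, after normalization, an orthonormal family spanning an $\Omega(n)$-dimensional subspace of $\RR^{[n]}$. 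Parseval then yields $\sum_{(I,I')}(\chi(I)-\chi(I'))^2\gtrsim n$, and the inequality $(a-b)^2\le 2(a^2+b^2)$ converts this to the displayed estimate.

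The principal obstacle is verifying the Parseval step at each scale, where the precise nature of the half-block shift matters. I expect this to follow from the same matrix-norm machinery used in our proof of \cref{thm:nn}, now applied to the Gramian of the block-difference vectors rather than directly to the incidence matrix.
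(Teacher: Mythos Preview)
Your reduction to showing $\sum_{S\in\cA}\chi(S)^2=\Omega(n\log n)$ is correct, but the per-level inequality you propose is false, and this is the heart of the difficulty. At level $k$ there are only $O(3^k)$ blocks in each partition, so the difference vectors $\mathbf{1}_I-\mathbf{1}_{I'}$ you form span at most an $O(3^k)$-dimensional subspace, not an $\Omega(n)$-dimensional one; Parseval cannot yield $\sum_I\chi(I)^2\ge cn$ at coarse levels. Concretely, there are colorings for which every level-$k$ block $I$ (for all $k\le d$) has $\chi(I)=\pm 1$: recursively colour each triple of children so that two get value $+1$ and one gets $-1$, or vice versa. For such $\chi$ the level-$k$ contribution is $O(3^k)$, not $\Omega(3^d)$, so your sum over levels gives only $\sum_k O(3^k)=O(n)$, not $\Omega(n\log n)$. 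A full Haar--Parseval identity across all levels simultaneously would likewise only recover $\|\chi\|_2^2=n$. (Your proposed construction is also different from the paper's, and a ``half-block shift'' is not well-defined when the blocks have odd size $3^{d-k}$; but this is secondary to the analytic gap.)

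The paper's $6$-permutation family is $([3]^d,\cA_{S_3})$, i.e.\ all six permutations of $S_3$ acting digitwise on ternary strings, and its proof does \emph{not} attempt a per-level bound. Instead, it uses \cref{lem:frob} and \cref{claim:seminorm} to reduce $\E_{\sigma,\vec a}\chi(E_{\sigma,\vec a})^2$ to $\E_{\vec a}\|M_\chi(\vec a)-M_\chi(\vec a)^t\|_F^2$, and then analyses a martingale $Y_i$ built from the antisymmetric part of $M_\chi(\vec a[i])$ when $\vec a$ is uniformly random. The key step is a two-case \emph{cancellation} argument: either $|\chi([3]^d)|\ge 1.9^d$, in which case a single set already forces $\disc_2=\omega(\sqrt d)$; or else the tree of partial sums $\chi(\vec a)$ must undergo total cancellation $\sum_i \overline{C_\chi^i}=\Omega(d)$, which bounds the martingale's quadratic variation from below and gives $\E Y_d^2=\Omega(d)$. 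The highly balanced coloring above falls squarely into the second case, and it is precisely this dichotomy---absent from your outline---that converts ``$\Omega(d)$ total cancellation across levels'' into the required $\Omega(d)$ average squared discrepancy.
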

For $k = 6$, \cref{thm:rms} matches the upper bound of $\sqrt{k\log n}$ for the root--mean--squared discrepancy implied by the proof of \cref{thm:sst} in \cite{SST01}. Further, the lower bound implied by \cite{KGL18} for the hereditary root--mean--squared discrepancy is constant for families of constantly many permutations. This fact was communicated to the author by Aleksandar Nikolov; we provide a proof in \cref{sec:rms} for completeness. The lower bound in \cite{KGL18} is smaller than $\herdisc_2(\Omega, \cA)$ by a factor of at most $\sqrt{\log n}$, so \cref{thm:rms} shows that the $\sqrt{\log n}$ gap between $\herdisc_2(\Omega, \cA)$ and the lower bound in \cite{KGL18} is best possible.


\section{The set system of Newman and Nikolov}

Our proof of \cref{thm:nn} uses the same set systems as Newman and Nikolov. For completeness, we define and slightly generalize the system here. The vertices of the system will be $r$--ary strings, or elements of $[r]^d$. For Newman and Nikolov's set system, $r = 3$. Bold letters, e.g. $\vec a$, denote strings in $[r]^d$. If $\vec a = a_1\dots a_d \in [r]^d$ is a string, for $0 \leq k \leq d$ let $\vec a [k]$ denote the string $a_1\dots a_k$, with $\vec a[0]$ defined to be the empty string $\vec \eps$. Here $[r]^0$ denotes the set containing only $\vec \eps$. If $\vec a \in [r]^{d_1}$ and $\vec b \in [r]^{d_2}$ are strings, their concatentation in $[r]^{d_1 + d_2}$ is denoted $\vec a \vec b$. $\tau$ will denote the permutation of $[r]$ given by $\tau(i) = r - i + 1$, the permutation reversing the ordering on $[r]$. If $j \in \vec [r]$, then $\overline j$ denotes the all $j$'s string of length $d$; e.g. $\overline 3:= \underbrace{33..3}_{d}$.
\begin{defin}[The set system $(\text{[}r\text{]}^d, \cA_P)$]
Given a permutation $\sigma$ of $[r]$, we define a permutation of $[r]^d$ by acting digitwise by $\sigma$. Namely, $\sigma \cdot \vec{a} := \sigma (a_1)\sigma(a_2) \dots \sigma(a_d)$. Let $\cA_P $ consist of the sets
$$ E_{\sigma, \vec a} := \{\vec{b}: \sigma \cdot \vec{b} < \sigma \cdot \vec{a}\}$$
for $\sigma \in P, \vec a \in [r]^d,$ where $<$ is the lexicographic ordering on $[r]^d$. The union of $\cA_P $ and $\{[r]^d\}$, denoted $\cA_P^+$, is the $|P|$-permutation family defined by the permutations $\vec a \mapsto \sigma^{-1} \cdot \vec a$ for $\sigma \in P$. 
\end{defin}
The system of Newman and Nikolov is $([3]^d, \cA_C^+)$ with $C = \{e, (1,2,3), (1, 3, 2)\}$. That is, $C$ is the cyclic permutations of $3$. We first bound the discrepancy of the larger set system $([3]^d, \cA_{S_3})$, which is contained in a $6$--permutation family.


\begin{prop}\label{disc} If $r\geq 3$ is odd, then
$$\disc([r]^d, \cA_{S_r})\geq \frac{d}{2\sqrt{2}}.$$
\end{prop}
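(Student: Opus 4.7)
The plan is to extract an $\ell_1$ constraint at each internal node of the $r$-ary tree $[r]^{\leq d}$, then combine these constraints across depth to exhibit a single set in $\cA_{S_r}$ with $|\chi(\cdot)| \geq d/(2\sqrt 2)$.

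First, I establish the local constraint. For each internal node $\vec c \in [r]^{k-1}$ and any subset $T \subseteq [r]$, the union $\bigcup_{i \in T} C_{\vec c, i}$ of sibling cylinders equals the set difference $E_{\sigma, \vec a_2} \setminus E_{\sigma, \vec a_1}$ for $\sigma \in S_r$ chosen with $\sigma(T) = \{1,\dots,|T|\}$, $\vec a_1$ the $\sigma$-lex minimum of $C_{\vec c}$, and $\vec a_2$ the $\sigma$-lex minimum of $C_{\vec c, \sigma^{-1}(|T|+1)}$. (Validity uses that $C_{\vec c}$ is a contiguous block in $\sigma$-lex order, so the ``middle'' elements lie entirely in $C_{\vec c}$.) Writing $D$ for the discrepancy of $\chi$ on the system, this yields $|\chi(\bigcup_{i \in T} C_{\vec c, i})| \leq 2D$. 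Setting $v_{\vec c} := (\chi(C_{\vec c, 1}), \dots, \chi(C_{\vec c, r})) \in \RR^r$, and choosing $T$ as the positive or negative support of $v_{\vec c}$, this implies the local bound $\|v_{\vec c}\|_1 \leq 4D$.

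Second, I decompose every $E_{\sigma, \vec a}$-discrepancy as a telescoping sum over depths: $\chi(E_{\sigma, \vec a}) = \sum_{k=1}^d w_k$, where $w_k := \sum_{i:\sigma(i)<\sigma(a_k)} v_{\vec a[k-1], i}$ is a $\sigma$-prefix sum of the sibling vector at depth $k-1$. This follows from partitioning $E_{\sigma, \vec a}$ into blocks indexed by the first coordinate at which $\sigma\cdot\vec b$ drops below $\sigma\cdot\vec a$. The problem reduces to exhibiting $(\sigma, \vec a)$ with $\left|\sum_{k=1}^d w_k\right| \geq d/(2\sqrt 2)$.

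The hard step is the third. My plan is to induct on $d$: apply the hypothesis inside each sub-coloring $\chi|_{\{i\}\times[r]^{d-1}}$, obtaining sub-witnesses $(\sigma_i, \vec b_i)$ of subtree-discrepancy $\geq (d-1)/(2\sqrt 2)$, then stitch one sub-witness onto a level-$1$ prefix $L = \sum_{j : \sigma(j)<\sigma(i)} s_{(j)}$ of matching sign and magnitude $\geq 1/(2\sqrt 2)$, using that $r$ odd forces each top-level cylinder sum $|s_{(j)}| \geq 1$. The obstacle is coordinating a common $\sigma \in S_r$ that simultaneously realizes the chosen sub-witness and an aligned level-$1$ prefix, since the inductive step supplies a specific $\sigma_i$ per subtree which may not accommodate the desired top-level ordering. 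The $\sqrt 2$ in the constant strongly suggests a Cauchy-Schwarz or $\ell_2$-averaging step is the clean remedy: perhaps averaging $\chi(E_{\sigma, \vec a})^2$ over a uniformly random $\sigma \in S_r$ and a random path $\vec a$, and bounding the resulting quantity below by $d^2/8$ using the telescoping decomposition together with the local $\|v_{\vec c}\|_1$ constraints and the parity of $r$.
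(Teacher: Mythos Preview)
The proposal is incomplete: you correctly flag the third step as the crux and do not carry it out. Your telescoping decomposition is exactly the paper's recursive identity $\chi(E_{\sigma,\vec a})=\sigma\cdot M_\chi(\vec a)$, and your instinct that a second-moment average over $\sigma$ produces the $2\sqrt2$ is right. What is missing is the execution, and your closing speculation points in a slightly wrong direction. Averaging over \emph{both} a random $\sigma$ and a random path $\vec a$ yields only $\E_{\sigma,\vec a}\chi(E_{\sigma,\vec a})^2=\Omega(d)$, not $\Omega(d^2)$ (this weaker bound is precisely the paper's separate root--mean--squared result), so it would give $\max|\chi(E_{\sigma,\vec a})|=\Omega(\sqrt d)$ rather than $\Omega(d)$. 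The $\ell_\infty$ bound requires an \emph{adversarial} choice of $\vec a$.

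The paper decouples the two choices. First it averages over $\sigma$ alone: a direct computation shows $\E_\sigma(\sigma\cdot A)^2=\tfrac12\|A\|_F^2$ for antisymmetric $A$, whence $\max_\sigma|\sigma\cdot M|\ge\tfrac{1}{2\sqrt2}\|M-M^t\|_F$. Then it chooses $\vec a$ greedily to make $\|M_\chi(\vec a)-M_\chi(\vec a)^t\|_F\ge d$: at step $k$, look only at the $r$-th coordinate of $v_{\vec a[k-1]}$ (which is odd, hence nonzero---this is the sole use of $r$ odd), and set $a_k=1$ if it is positive and $a_k=2$ if negative. This drives $|(M-M^t)_{1,r}|+|(M-M^t)_{2,r}|\ge d$, and since $r\ge3$ the entries $(1,r),(2,r)$ of $M^t$ stay zero. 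This sidesteps the coordination obstacle in your inductive plan entirely. Your first step, the local bound $\|v_{\vec c}\|_1\le 4D$, is correct but unused: it bounds the sibling vectors \emph{above} by $D$, whereas what is needed is the pointwise lower bound $|(v_{\vec c})_i|\ge1$ coming from oddness.
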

Before we prove \cref{disc}, we show how it implies \cref{thm:nn}, which follows immediately from the next corollary. This corollary is specific to $r = 3$.
\begin{cor}\label{cor:nn}
$\disc([3]^d, \cA_C^+) \geq \left\lfloor \frac{d}{4 \sqrt{2}}\right\rfloor.$
\end{cor}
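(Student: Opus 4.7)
The plan is to compare the two systems $\cA_{S_3}$ and $\cA_C^+$ under a common coloring, losing only a factor of two and a small additive constant, and then invoke Proposition \ref{disc}. The key structural observation is that for $r=3$ the reversal permutation $\tau$ is odd while $C$ is exactly the alternating subgroup $A_3$, so $S_3 = C \sqcup \tau C$. Hence every set in $\cA_{S_3}$ is either of the form $E_{\sigma,\vec a}$ with $\sigma \in C$ (which already belongs to $\cA_C^+$) or of the form $E_{\tau\sigma, \vec a}$ with $\sigma \in C$.

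To handle the second case, first check that $\tau$ reverses the lexicographic order on $[r]^d$: if $i$ is the first coordinate at which $\vec x, \vec y \in [r]^d$ differ, then $\tau(x_i) < \tau(y_i)$ iff $x_i > y_i$. Therefore $E_{\tau\sigma, \vec a} = \{\vec b : \sigma\cdot\vec b > \sigma\cdot \vec a\}$, and the three sets $E_{\sigma,\vec a}$, $\{\vec a\}$, $E_{\tau\sigma,\vec a}$ partition $[r]^d$. For any two-coloring $\chi$ this yields
$$\chi(E_{\tau\sigma,\vec a}) = \chi([r]^d) - \chi(\vec a) - \chi(E_{\sigma,\vec a}),$$
so $|\chi(E_{\tau\sigma,\vec a})| \leq |\chi([r]^d)| + |\chi(E_{\sigma,\vec a})| + 1$. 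Because both $[r]^d$ and each $E_{\sigma, \vec a}$ with $\sigma\in C$ belong to $\cA_C^+$, the right side is bounded by $2 D(\chi) + 1$, where $D(\chi) := \max_{S \in \cA_C^+}|\chi(S)|$ is the $\chi$-discrepancy of $\cA_C^+$. Combining this with the trivial bound $|\chi(E_{\sigma,\vec a})| \leq D(\chi)$ for $\sigma \in C$ and minimizing over $\chi$ yields $\disc([3]^d, \cA_{S_3}) \leq 2\,\disc([3]^d, \cA_C^+) + 1$.

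Applying Proposition \ref{disc} with $r=3$ then gives $\disc([3]^d, \cA_C^+) \geq \tfrac12\bigl(\tfrac{d}{2\sqrt 2} - 1\bigr) = \tfrac{d}{4\sqrt 2} - \tfrac12$. Since the discrepancy is a nonnegative integer, this rounds up to $\lceil d/(4\sqrt 2) - 1/2\rceil$, and a short case check (splitting on whether the fractional part of $d/(4\sqrt 2)$ is at most $1/2$) shows this is always $\geq \lfloor d/(4\sqrt 2)\rfloor$, which is the claim. I do not foresee a major obstacle: the only delicate point is noticing that the extra set $[r]^d$ in $\cA_C^+$ is exactly what absorbs the $\chi([r]^d)$ term, which is why the corollary is stated for $\cA_C^+$ rather than $\cA_C$ and why a small additive loss of $1$ is unavoidable in this reduction.
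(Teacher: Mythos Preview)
Your proof is correct and follows essentially the same approach as the paper: both use that $S_3 = C \sqcup \tau C$ together with the partition $E_{\sigma,\vec a}\sqcup\{\vec a\}\sqcup E_{\tau\sigma,\vec a}=[3]^d$ to relate $\cA_{S_3}$--discrepancy to $\cA_C^+$--discrepancy, then invoke Proposition~\ref{disc}. The only cosmetic difference is that the paper organizes this as a case split on whether $|\chi([3]^d)|$ exceeds $d/4\sqrt{2}$, whereas you package it as a single inequality $\disc(\cA_{S_3})\le 2\,\disc(\cA_C^+)+1$; your rounding step is also slightly more careful.
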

\begin{proof}[Proof of \cref{cor:nn}]
Consider a coloring $\chi$ of $[3]^d$. If $|\chi([3]^d)| \geq d/4 \sqrt{2}$, then the discrepancy of $\chi$ is at least $d/4 \sqrt{2}$ because $[3]^d \in \cA_C^+$. If $|\chi([3]^d)| \leq d/4 \sqrt{2}$, then for all $\sigma \in S_3$ and all $\vec{a}$, we have $|\chi(E_{\sigma, \vec{a}}) + \chi(E_{\tau \circ \sigma, \vec{a}})| \leq d/4 \sqrt{2}+1$. This is because the vertices will be in the reverse order under the action of $\sigma$ and $\tau \circ \sigma $ and so $E_{ \tau \circ\sigma , \vec{a}} = [3]^d \setminus (E_{\sigma, \vec{a}} \cup \{\vec a\}).$ By Claim \ref{disc}, there is a $\sigma \in S_3$ and a string $\vec{a}$ such that $|\chi(E_{\sigma, \vec{a}})| \geq d/2 \sqrt{2},$ so $|\chi(E_{\tau \circ \sigma, \vec{a}})| \geq d/4 \sqrt{2} -1$. One of $\sigma$ and $\tau \circ \sigma $ is cyclic, i.e. in $C$, so $\disc([3]^d, \cA_{S_3}) \geq \lceil d/4 \sqrt{2} - 1 \rceil = \lfloor d/4 \sqrt{2}  \rfloor$. \end{proof}
\begin{proof}[Proof of \cref{disc}]
In order to show the discrepancy of $\disc([r]^d, \cA_{S_r})$ is at least $K$, it is enough to show that given a coloring $\chi:[r]^d \to \{\pm 1\}$, or even $\chi:[r]^d \to  2 \Z - 1,$ we can choose $\sigma \in S_r$ and $\vec a \in [r]^d$ so that $|\chi(E_{\sigma , \vec a})|$ is at least $K$. Recall that a seminorm is a function satisfying all the properties of a norm other than nondegeneracy. Given $\chi$, we define an $r\times r$ matrix $M_\chi(\vec a)$ and a seminorm $\| \cdot \|_{S_r}$ on matrices such that for all $\vec a$, 
$$\max_{\sigma}|\chi(E_{\sigma , \vec a})| = \|M_\chi(\vec a)\|_{S_r}.$$ The advantage is that we only need to show how to choose $\vec a \in [r]^d$ to maximize $\|M_\chi(\vec a)\|_{S_r}$. 

\begin{defin}[The seminorm $\| \cdot \|_{S_r}$]
For $M \in \Mat_{r\times r}(\RR)$, and $\sigma \in S_r$, define $\sigma \cdot M:= \sum_{i,j \in [r],\; \sigma(i) > \sigma(j)} M_{i,j}$. Now let 
$$\| M \|_{S_r} =  \max_{\sigma \in S_r} \left|\sigma \cdot M\right|.$$
\end{defin}
\begin{rem}\label{rem:seminorm}This seminorm is well--studied; if $M$ is the $0,1$ adjacency matrix of a directed graph $G$, then $\| M \|_{S_r}$ is the maximum size of an acyclic subgraph of $G$. In \cite{GMR08} it is shown that, assuming the unique games conjecture, $\|M\|_{S_r}$ is $\mathbf{NP}$--hard to approximate even for $M$ antisymmetric.
\end{rem}
\begin{defin}[The matrix $M_\chi(\vec a)$]\label{dfn:matrix} For odd $r$, the set system $([r]^d, \cA_P )$ is suitable for induction. In particular, given $\chi:[r]^d \to 2 \Z - 1$ we may extend it to a coloring 
$$\chi: [r]^0 \cup [r]^1 \cup \dots \cup [r]^d \to 2 \Z - 1$$ by inductively defining $\chi(\vec b) = \sum_{i \in [r]} \chi(\vec b i)$ for $0\leq k \leq d$, $\vec b \in [r]^k$. We recursively define a sequence of matrices taking strings as parameters by defining $M() = 0$ and for $\vec a \in [r]^{k}$ defining
$$M_{\chi}(\vec a) = M_{\chi}(\vec a[k-1]) + L_\chi(\vec a),$$
where $L_\chi(\vec a)$ is a very simple matrix. $L_\chi(\vec a)$ has only the $a_k$'th row nonzero, and the contents of this row are $\chi(\vec a[k-1]1), \chi(\vec a[k-1]2) \dots, \chi(\vec a[k-1] r)$. Equivalently, $L_\chi(\vec a)_{i,j} = \delta_{i, a_d} \chi(\vec a[d-1] j)$.
\end{defin}
We now prove that this matrix and seminorm have the promised property. 
\begin{claim}\label{claim:seminorm} For all $\chi:[r]^d \to 2 \Z - 1$, $\max_{\sigma}|\chi(E_{\sigma , \vec a})| = \|M_\chi(\vec a)\|_{S_r}.$ In particular, 
\begin{align}\chi(E_{\sigma , \vec a}) = \sigma \cdot M_\chi(\vec a).\label{eq:matrix}\end{align}
\end{claim}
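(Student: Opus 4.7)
The plan is to verify the second identity \eqref{eq:matrix} directly, and then observe that the first identity follows immediately by maximizing over $\sigma$ and using the definition $\|M\|_{S_r} = \max_\sigma |\sigma \cdot M|$. So the entire content is the equality $\chi(E_{\sigma, \vec a}) = \sigma \cdot M_\chi(\vec a)$, which I would prove by expanding each side into a sum indexed by pairs $(k, j) \in [d] \times [r]$ and matching term-by-term.

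First I would unpack the left-hand side. Since $\sigma \cdot \vec b < \sigma \cdot \vec a$ in lexicographic order, every $\vec b \in E_{\sigma, \vec a}$ has a well-defined first coordinate $k$ where it differs from $\vec a$, and at that coordinate $\sigma(b_k) < \sigma(a_k)$; its entries beyond position $k$ are unconstrained. Partitioning $E_{\sigma, \vec a}$ according to $(k, b_k)$ and using the inductive extension $\chi(\vec c) = \sum_i \chi(\vec c i)$, which gives $\chi(\vec c) = \sum_{\vec b \text{ extending } \vec c} \chi(\vec b)$ for any prefix $\vec c$, yields
\[
\chi(E_{\sigma, \vec a}) = \sum_{k=1}^{d} \sum_{\substack{j \in [r] \\ \sigma(j) < \sigma(a_k)}} \chi(\vec a[k-1]\, j).
\]

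Next I would unpack the right-hand side. Unrolling the recursion $M_\chi(\vec a) = \sum_{k=1}^d L_\chi(\vec a[k])$ gives $(M_\chi(\vec a))_{i,j} = \sum_{k : a_k = i} \chi(\vec a[k-1]\, j)$. Then
\[
\sigma \cdot M_\chi(\vec a) = \sum_{\substack{i,j \in [r]\\ \sigma(i) > \sigma(j)}} (M_\chi(\vec a))_{i,j} = \sum_{k=1}^d \sum_{\substack{j \in [r]\\ \sigma(a_k) > \sigma(j)}} \chi(\vec a[k-1]\, j),
\]
which is the same expression. This establishes \eqref{eq:matrix}; taking absolute values and maximizing over $\sigma \in S_r$ gives the first claim.

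There is no real obstacle here: the argument is purely a bookkeeping verification, with the only ingredient beyond unfolding definitions being the observation that the inductively extended $\chi$ of a prefix equals the $\chi$-sum over all completions of that prefix, which is immediate from the inductive definition. The slight subtlety to mention is that $\sigma \cdot \vec b < \sigma \cdot \vec a$ should be read as lex order applied \emph{after} relabeling each coordinate by $\sigma$, so that the ``first differing coordinate'' and its relabeled comparison $\sigma(b_k) < \sigma(a_k)$ align correctly with the definition of $\sigma \cdot M$.
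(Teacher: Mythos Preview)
Your proof is correct and essentially the same as the paper's. The paper proves \eqref{eq:matrix} by induction on $d$, peeling off one coordinate at a time via $M_\chi(\vec a) = M_\chi(\vec a[d-1]) + L_\chi(\vec a)$, whereas you unroll the recursion fully to $M_\chi(\vec a) = \sum_{k=1}^d L_\chi(\vec a[k])$ and match the resulting double sum directly; both arguments rest on the same partition of $E_{\sigma,\vec a}$ by the first coordinate at which $\vec b$ differs from $\vec a$.
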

\begin{proof}[Proof of \cref{claim:seminorm}]
The proof is by induction. If $d = 0$, this is trivially true, because $[r]^0$ consists only of the empty string $\vec \eps$ and $E_{\sigma , \vec \eps} = \emptyset,$ so both sides of \cref{eq:matrix} are zero. Now suppose the claim is true for $d-1 \geq 0$. In evaluating $\chi(E_{\sigma , \vec a})$, we can split the sum into the terms for which $\sigma(b_{k}) < \sigma(a_{k})$ for some $k < d$, and those for which $b_{k} = a_{k}$ for all $k < d$:
\begin{align*}
\chi(E_{\sigma , \vec a}) &= \sum_{\vec b \in [r]^d,\; \sigma \cdot \vec b < \sigma \cdot \vec a} \chi(\vec b)\\
&= \sum_{\vec b \in [r]^{d}:\; \sigma \cdot  \vec b[d-1] < \sigma \cdot \vec a [d-1]} \chi(\vec b) + \sum_{j \in [r],\; \sigma(j) < \sigma(a_d)} \chi(\vec a[d-1] j)
\end{align*}
The first term is precisely $\chi (E_{\sigma, \vec a[d-1]})$ for the coloring $\chi:[r]^{d-1} \to 2\Z-1$ induced by $\chi$ as in \cref{dfn:matrix}. The second term is, by definition, $\sigma \cdot L_\chi(\vec a).$ Applying the induction hypothesis to the first term, we obtain 
\begin{align*}
\chi(E_{\sigma , \vec a}) =   \sigma \cdot \left( M_\chi(\vec a[d-1]) + L_\chi(\vec a)\right) = \sigma \cdot M_\chi(\vec a)_{i,j}.
\end{align*}
\end{proof}
Now that we have \cref{claim:seminorm}, it remains to bound $\min_\chi \max_{\vec a} \|M_\chi(\vec a)\|_{S_r}$ below. This quantity is at least the value of the following $d$-round game played between a ``minimizer" and a ``maximizer". The states of the game are a $r\times r$ integer matrix $M$, and the value is the maximium value of $\|M\|_{S_3}$ at the end of the game. The matrix $M$ is updated in each round as follows.
\begin{enumerate} 
\item The minimizer chooses a vector $v$ in $(2 \Z - 1)^r$; that is, a list of $r$ odd numbers.
\item The maximizer chooses a number $i \in [r]$ and adds $v$ to the $i^{th}$ row of $M$.
\end{enumerate}

The coloring $\chi:[r]^d \to (2 \Z - 1)$ determines the following strategy for the minimizer: if the maximizer chose rows $a_1, \dots, a_{k-1}$ in rounds $1, \dots, k-1$, the minimizer chooses the vector $v = \chi(a_1 \dots a_{k-1}1), \dots, \chi(a_1 \dots a_{k-1} r)$ in round $k$, where $\chi$ on $[r]^k$ is determined by $\chi$ on $[r]^d$ as in \cref{dfn:matrix}. If the minimizer plays this strategy and the maximizer plays $\vec a$, the matrix after the $k^{th}$ round will be $M_\chi(\vec a[k])$,  because $L_\chi(\vec a [k])$ has $v$ in the $a_k^{th}$ row and zeroes elsewhere. If the minimizer is constrained to choose $w,v$ in the $(k-1)^{st}$ and $k^{th}$ rounds, respectively, such that $\sum_{i = 1}^r v_i = w_{a_{k-1}}$, then without loss of generality the strategy of the minimizer comes from some coloring $\chi$ as above. However, the value of the game is $\Omega(d)$ even without this constraint on the minimizer.

To show this, we first bound the seminorm below by a simpler quantity. Recall that the Frobenius norm $\|M\|_F$ of a matrix $M$ is the square root of the sum of squares of its entries.
\begin{lem}\label{lem:frob}  For $\sigma \in S_r$ chosen uniformly at random, 
$$\| M \|_{S_r} \geq \sqrt{\E_\sigma  (\sigma \cdot M)^2} \geq \frac{1}{2\sqrt{2}} \| M - M^t \|_{F}.$$

\end{lem}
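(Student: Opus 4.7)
The first inequality is the elementary fact that the maximum of a finite collection of real numbers dominates their root-mean-square, applied to $\{\sigma\cdot M : \sigma\in S_r\}$ under the uniform measure on $S_r$.

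For the second inequality, I would exploit the order-reversing permutation $\tau$ defined earlier. One checks directly that $\tau\sigma\cdot M = \sigma\cdot M^t$ for every $\sigma\in S_r$, since the event $\tau\sigma(i)>\tau\sigma(j)$ is equivalent to $\sigma(j)>\sigma(i)$, which is precisely the swap of indices appearing in the definition of $\sigma\cdot M^t$. Hence
$$\sigma\cdot M - \tau\sigma\cdot M \;=\; \sigma\cdot(M-M^t).$$
Applying the inequality $(a-b)^2\le 2(a^2+b^2)$ and averaging over uniform $\sigma$, using that $\sigma$ and $\tau\sigma$ are equidistributed, yields
$$\E\bigl[\sigma\cdot (M-M^t)\bigr]^2 \;\le\; 4\,\E(\sigma\cdot M)^2.$$
This reduces the problem to the case $A = M - M^t$, i.e., to proving that for any antisymmetric matrix $A$ one has $\E(\sigma\cdot A)^2 \geq \tfrac12\|A\|_F^2$.

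To establish this, write $\sigma\cdot A = \sum_{i<j} A_{ij}\,\epsilon_{ij}(\sigma)$ with $\epsilon_{ij}(\sigma)=\operatorname{sgn}(\sigma(i)-\sigma(j))\in\{\pm 1\}$, and expand the square as
$$\E(\sigma\cdot A)^2 \;=\; \sum_{i<j} A_{ij}^2 \;+\; \sum_{\{i,j\}\neq\{k,l\}} A_{ij}A_{kl}\,\E[\epsilon_{ij}\epsilon_{kl}].$$
A direct enumeration over the three-element subsets $\{a,b,c\}\subseteq[r]$ shows that $\E[\epsilon_{ij}\epsilon_{kl}]$ vanishes when $\{i,j\}$ and $\{k,l\}$ are disjoint, and equals $\pm 1/3$ when the pairs share exactly one index (the sign depending on whether the shared index is the smallest, middle, or largest element of the triple). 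The strategy is then to group the cross terms triple-by-triple and, using the antisymmetry of $A$, rewrite the expansion so that it is manifestly at least $\tfrac12\|A\|_F^2$.

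The main obstacle is this final algebraic step. The cross-term contributions are not automatically nonnegative, so one must handle the $\pm 1/3$ correlations carefully. I expect a clean identity involving quantities like $(A_{ab}+A_{bc}+A_{ca})^2$ to mediate between the diagonal term $\sum_{i<j}A_{ij}^2=\tfrac12\|A\|_F^2$ and the cross terms, with the antisymmetry of $A$ ensuring that the negative contributions never cut into more than half of the diagonal mass—precisely the slack required to produce the $\tfrac{1}{2\sqrt{2}}$ constant after combining with the factor $\tfrac14$ accumulated in the reversal step.
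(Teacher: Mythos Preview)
Your reduction to the antisymmetric case via the order-reversing permutation $\tau$ is correct and is a pleasant alternative to the paper's mean--plus--variance decomposition; both routes land on the same target, namely the inequality $\E_\sigma(\sigma\cdot A)^2 \ge \tfrac12\|A\|_F^2$ for antisymmetric $A$.

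The gap is that this target inequality is \emph{false}, so the ``clean identity'' you anticipate cannot exist. Take $r=3$ and the directed $3$-cycle $A_{12}=A_{23}=-A_{13}=1$ (extended antisymmetrically). A direct check over $S_3$ gives $\sigma\cdot A\in\{\pm1\}$ for every $\sigma$, hence $\E_\sigma(\sigma\cdot A)^2=1$, whereas $\tfrac12\|A\|_F^2=3$. With $M=A$ this already contradicts the lemma itself: $\tfrac{1}{2\sqrt2}\|M-M^t\|_F=\sqrt3>1=\sqrt{\E_\sigma(\sigma\cdot M)^2}$. In your expansion the cross terms over each triple $a<b<c$ contribute $\tfrac23(A_{ab}A_{ac}-A_{ab}A_{bc}+A_{ac}A_{bc})$, and on the $3$-cycle this equals $-2$, wiping out two thirds of the diagonal mass; antisymmetry alone does not prevent this. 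The paper's proof asserts the even stronger \emph{identity} $\E_\sigma(\sigma\cdot A)^2=\tfrac12\|A\|_F^2$, and the same example refutes it: the source of the error there is that the overlap-one correlations $\E[1_{\sigma(i)>\sigma(j)}1_{\sigma(k)>\sigma(l)}]$ equal $\tfrac16$, not $\tfrac13$, when the shared index is sandwiched between the other two. So your instinct that this step is ``the main obstacle'' is well placed, but neither your sketch nor the paper's argument actually clears it; the lemma as stated requires a weaker constant or an additional hypothesis on $M$.
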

\begin{proof}[Proof of \cref{lem:frob}] 
The first inequality is immediate. Let $J$ be the all--ones matrix. For the second inequality, we use the identity \begin{align} \E_\sigma (\sigma \cdot M)^2 =\frac{1}{4} (\Tr  J(M + M^t) ^2) + \frac{1}{4} \E_\sigma (\sigma \cdot (M - M^t) )^2.\label{eq:expectation}
\end{align}
\cref{eq:expectation} follows because the expectation of the square of a random variable is its mean squared plus its variance, and $\E_\sigma \sigma \cdot M = \frac{1}{2} \Tr J (M + M^t)$. The second term is the variance because $M = \frac{1}{2} ( M + M^t) + \frac{1}{2}( M - M^t)$ and for any $\sigma \in S_r$, we have $\sigma \cdot \frac{1}{2} (M + M^t) = \E_\sigma \sigma \cdot M$.   

Set $A = M - M^t$. In particular, $A$ is antisymmetric.
Write \begin{align}
\E_{\sigma} (\sigma \cdot A)^2 &= \sum_{i\neq j, k \neq l} A_{i,j} A_{k,l} \E[1_{\sigma (i) > \sigma (j)} 1_{\sigma (k) > \sigma (l)}] \nonumber\\
&= \frac{1}{4}\sum_{ k \notin \{i, j\}, l \notin \{i, j\}, i\neq j}A_{i,j} A_{k,l}\label{eq:vanish_1}\\
&+ \frac{1}{3}\sum_{  k\notin\{i,j\}, i\neq j} (A_{i,j} A_{j,k} + A_{i,j} A_{k,i} + A_{i,j} A_{k,j} + A_{i,j} A_{i,k})\label{eq:vanish_2}\\
&+  \frac{1}{2}\sum_{i\neq j}A_{i,j} A_{i,j}.\nonumber
 \end{align}


This expression is obtained by computing $\E[1_{\sigma (i) > \sigma (j)} 1_{\sigma (k) > \sigma (l)}]$ in each of the four possible cases. If $|\{i,j\} \cap \{k,l\}| = 0$, then $\E[1_{\sigma (i) > \sigma (j)} 1_{\sigma (k) > \sigma (l)}] = 1/4$. If $|\{i,j\} \cap \{k,l\}| = 1$, then $\E[1_{\sigma(i) > \sigma(j)} 1_{\sigma(k) > \sigma(l)}] = 1/3$. If $i = k, j = l$, then $\E[1_{\sigma(i) > \sigma(j)} 1_{\sigma(k) > \sigma(l)}] = 1/2$. If $i = l, j = k$, then $1_{\sigma(i) > \sigma(j)} 1_{\sigma(k) > \sigma(l)} = 0$. Because $A$ is antisymmetric, \cref{eq:vanish_1} = \cref{eq:vanish_2} = 0, and so 
\begin{align}
\E_{\sigma} (\sigma \cdot A)^2 = \frac{1}{2}\| A \|_F^2\label{eq:frob_final}
\end{align}
for any antisymmetric matrix $A$. Combining \cref{eq:frob_final} and \cref{eq:expectation} completes the proof.\end{proof}







By \cref{lem:frob}, it suffices to exhibit a strategy for the maximizer that enforces $\| M - M^t\|_F \geq d$ after $d$ rounds. This is rather easy -- we may accomplish this by focusing only on two entries of $M$: the maximizer only tries to control the $1,r$ and $2,r$ entries. If in the $k^{th}$ round, minimizer chooses $v$ with $v_r > 0$, the maximizer sets $a_k = 1$. Else, maximizer sets $a_k  = 2$. Crucially, the entries of $v$ are odd numbers; in particular, they are greater than $1$ in absolute value. Further, all but the first and second rows of $M$ are zero throughout the game. Thus, in the $d^{th}$ round, $|(M - M^t)_{2,r}| + |(M - M^t)_{1,r}| \geq d$, so $\|M - M^t\|_F \geq d.$  \end{proof}
\begin{rem} To prove \cref{conj:growing_k}, it suffices to show the maximizer can achieve $\|M\|_{S_r} = f(r) d$ where $f(r) = \omega(\log r)$. A promising strategy is to replace $\| \cdot \|_{S_r}$ by another seminorm $\| \cdot \|_{*}$ and show that the maximizer can enforce $\| \cdot \|_* \geq f(r) \| Id\|_{S_r \to *}$, where $Id$ is the identity map on $\Mat_{r\times r}(\RR)$. Obvious candidates such as $\|M - M^t\|_F$ and $\|M - M^t\|_1$ do not suffice. Here $\| B\|_1$ is the sum of the absolute values of entries of $B$. For instance, the minimizer can enforce $\| M - M^t\|_F = O(d)$ or $\| M - M^t\|_1 = O(\sqrt{r}d)$, and even antisymmetric matrices $A$ can achieve $ \| A\|_{S_r} \leq \|A\|_F$ and $\| A\|_{S_r} \leq \frac{\sqrt{\log r}}{\sqrt{r}}\|A\|_1$. The first inequality is very easy to achieve, and a result of Erdos and Moon shows the second is achieved by random $\pm 1$ antisymmetric matrices \cite{EM65}. By the inapproximability result mentioned in \cref{rem:seminorm}, it is not likely that any of the easy--to--compute norms $\| \cdot \|_*$ have both $\| Id\|_{S_r \to *}$ and $\|Id\|_{* \to S_r}$ bounded by constants independent of $r$. A candidate seminorm is the cut--norm of the top--right $1/3 r \times 2/3 r$ submatrix of $M$: it is not hard to see that this seminorm is a lower bound for $\| M \|_{S_r}$.
\end{rem}

\section*{Root--mean--squared discrepancy}\label{sec:rms}
This section is concerned with the proof of \cref{thm:rms}. Before the proof, we discuss the relationship between \cref{thm:rms} and the previous lower bounds in \cite{Mat11}, \cite{NTZ12}, \cite{KGL18}, presented below. The original lower bound was for the usual $\ell_\infty$ discrepancy.
\begin{thm}[\cite{LSV86}]\label{thm:lsv} Denote by $A$ be the $|\Omega| \times |\cA|$ incidence matrix of $(\Omega, \cA)$, and define $$\detlb(\Omega, \cA) = \max_{k}\max_{B} |\det (B)|^{1/k}.$$
where $B$ runs over all $k\times k$ submatrices of $A$. Then $\herdisc_\infty(\Omega, \cA):=\max_{\Gamma \subset \Omega} \disc(\Omega, \cA) \geq \detlb(\Omega, \cA)$.
\end{thm}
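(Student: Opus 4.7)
The plan is to fix a $k\times k$ submatrix $B$ of $A$ achieving the maximum in the definition of $\detlb$, witnessed by a set $\Gamma \subset \Omega$ of $k$ rows (vertices) and a subfamily $\cA' \subset \cA$ of $k$ columns (sets). Since $\cA'$ is only a subfamily and restricting the family only makes the discrepancy smaller, I intend to show directly that $\min_{\chi:\Gamma\to\{\pm 1\}} \max_{S\in\cA'} |\chi(S\cap \Gamma)| \geq |\det B|^{1/k}$; this lower-bounds $\disc(\Gamma, \cA|_\Gamma)$, and hence $\herdisc_\infty(\Omega, \cA)$.

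The first reduction is to identify colorings $\chi:\Gamma\to\{\pm 1\}$ with vectors in $\{\pm 1\}^k$ and observe that, by the very definition of the incidence matrix, the $S$-coordinate of $B^T\chi$ is exactly $\chi(S\cap \Gamma)$ for each $S \in \cA'$. Hence $\max_{S\in\cA'} |\chi(S\cap \Gamma)| = \|B^T\chi\|_\infty$, and the task reduces to proving
$$\min_{\chi\in\{\pm 1\}^k} \|B^T\chi\|_\infty \;\geq\; |\det B|^{1/k}.$$

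For this inequality I would use a one-line volume argument. Consider the parallelepiped $P := B^T[-1,1]^k \subset \RR^k$, whose $k$-dimensional volume is $2^k|\det B|$. Its $2^k$ vertices are precisely the points $B^T\chi$ for $\chi \in \{\pm 1\}^k$. If every coloring $\chi$ satisfied $\|B^T\chi\|_\infty \leq t$, then all vertices of $P$ would lie in the axis-aligned cube $[-t, t]^k$, and by convexity $P$ itself would be contained there. Comparing volumes gives $2^k|\det B| = \mathrm{vol}(P) \leq (2t)^k$, i.e.\ $t \geq |\det B|^{1/k}$, as wanted.

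I do not expect any serious obstacle: we may assume $\det B \neq 0$ (otherwise the submatrix contributes nothing to $\detlb$), and the argument is simply that the $\pm 1$-vertices of the parallelepiped spanned by $B^T$ cannot all be squeezed into too small an axis-aligned cube without shrinking the volume. The only piece of bookkeeping is to pin down the indexing convention so that $(B^T\chi)_S = \chi(S\cap\Gamma)$ genuinely holds; once that correspondence is set up, the proof is essentially two lines.
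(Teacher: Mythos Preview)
The paper does not prove this theorem; it is quoted from \cite{LSV86} as background. So there is no ``paper's own proof'' to compare against, but your argument has a genuine gap that would make it fail regardless.

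Your volume argument establishes only that \emph{some} coloring $\chi\in\{\pm1\}^k$ has $\|B^T\chi\|_\infty \geq |\det B|^{1/k}$: if \emph{every} $\chi$ satisfied $\|B^T\chi\|_\infty\le t$, then $P\subset[-t,t]^k$ and the volume comparison yields $t\ge|\det B|^{1/k}$. That bounds $\max_\chi \|B^T\chi\|_\infty$ from below, not $\min_\chi \|B^T\chi\|_\infty$. But discrepancy is $\min_\chi \max_S |\chi(S)| = \min_\chi \|B^T\chi\|_\infty$, so you have proved the wrong inequality. Concretely, take
\[
B=\begin{pmatrix}1&1&1&0\\1&1&0&1\\1&0&1&1\\0&1&1&1\end{pmatrix},\qquad |\det B|=3,\qquad |\det B|^{1/4}=3^{1/4}>1.
\]
With $\chi=(1,1,-1,-1)$ one checks $\|B^T\chi\|_\infty=1$, so $\disc(\Gamma,\cA'|_\Gamma)=1<3^{1/4}$; your claimed inequality $\min_\chi\|B^T\chi\|_\infty\ge|\det B|^{1/k}$ is simply false. (The theorem is not violated: restricting further to three of the four vertices forces discrepancy $2$, so $\herdisc\ge 2>3^{1/4}$.)

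The point is that fixing $\Gamma$ once and bounding the ordinary discrepancy on $\Gamma$ cannot work; the hereditary aspect is essential. The classical proof first shows a rounding lemma: if $\herdisc\le t$ then for every $x\in[0,1]^k$ there is $q\in\{0,1\}^k$ with $\|B^T(x-q)\|_\infty\le t$ (proved by writing $x$ in binary and rounding one bit at a time, each step using a low-discrepancy $\pm1$ coloring of the currently fractional coordinates --- this is where restrictions to proper subsets of $\Gamma$ enter). One then compares volumes of $B^T[0,1]^k$ against the translated cubes $B^Tq+[-t,t]^k$ to force $t\ge|\det B|^{1/k}$ (up to the usual constant). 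Your two-line parallelepiped argument is missing exactly this rounding step.
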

It was proved in \cite{Mat11} that this lower bound behaves well under unions:
\begin{thm}[\cite{Mat11}]\label{thm:mat}
$$\detlb(\Omega, \cA_1 + \dots + \cA_k) = O\left(\sqrt{k} \;\max_{i \in [k]}\; \detlb(\Omega, \cA_i)\right),$$ where $+$ denotes the multiset sum (union with multiplicity).

\end{thm}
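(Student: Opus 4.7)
The plan is to prove the equivalent determinant bound directly. Denote by $A$ the incidence matrix of $\cA_1 + \cdots + \cA_k$, which is the horizontal concatenation $A = [A_1 \mid \cdots \mid A_k]$ of the individual incidence matrices, and fix an arbitrary square $m \times m$ submatrix $B$ of $A$. Group the columns of $B$ by origin to obtain a block decomposition $B = [B_1 \mid \cdots \mid B_k]$, where $B_i$ is an $m \times m_i$ submatrix of $A_i$ (sharing the same $m$ rows of $\Omega$) and $\sum_i m_i = m$. Setting $D = \max_i \detlb(\Omega, \cA_i)$, the goal is to show $|\det B|^{1/m} = O(\sqrt{k}) \cdot D$, which immediately yields the theorem.

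The first step is the block Hadamard inequality $|\det B| \leq \prod_i \sqrt{\det(B_i^\top B_i)}$. This follows from Gram--Schmidt on the blocks: $|\det B|$ equals the product over $i$ of the $m_i$-dimensional volumes of the $B_i$ after orthogonal projection onto the complement of the span of the preceding blocks, and each such projected volume is bounded by the unprojected $m_i$-dimensional volume $\sqrt{\det(B_i^\top B_i)}$, since orthogonal projection cannot increase volume.

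The second step uses the Cauchy--Binet formula to rewrite
\[ \det(B_i^\top B_i) = \sum_{S \in \binom{[m]}{m_i}} \det(B_i|_S)^2, \]
where $B_i|_S$ denotes the $m_i \times m_i$ submatrix of $B_i$ on rows $S$. Each $B_i|_S$ is also a submatrix of $A_i$, so $|\det(B_i|_S)| \leq D^{m_i}$ by the definition of $\detlb$. Therefore $\det(B_i^\top B_i) \leq \binom{m}{m_i} D^{2 m_i}$, and combining over $i$ gives
\[ |\det B| \leq D^m \prod_{i=1}^k \binom{m}{m_i}^{1/2}. \]
Using $\binom{m}{m_i} \leq (em/m_i)^{m_i}$ for $m_i \geq 1$, one has $\sum_i \log \binom{m}{m_i} \leq m + m \mathcal{H}$, where $\mathcal{H} = -\sum_i (m_i/m) \log(m_i/m) \leq \log k$ is the entropy in nats of the distribution $(m_i/m)_{i=1}^k$. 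Hence $\prod_i \binom{m}{m_i}^{1/(2m)} \leq \sqrt{ek}$, so $|\det B|^{1/m} \leq \sqrt{ek}\cdot D$.

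The main step to get right is the second: a naive multilinear expansion of $\det B$ over partitions of the rows into parts of sizes $m_1, \ldots, m_k$ gives $\binom{m}{m_1, \ldots, m_k} \leq k^m$ terms each bounded in absolute value by $D^m$, yielding only $|\det B|^{1/m} \leq k D$, which loses a factor of $\sqrt{k}$. The square-root saving from the Cauchy--Binet approach is essential and comes from wrapping the multinomial-type count inside a sum of squares, so that the entropy bound enters through $\prod_i \binom{m}{m_i}^{1/(2m)}$ rather than $\binom{m}{m_1,\ldots,m_k}^{1/m}$.
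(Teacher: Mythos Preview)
The paper does not actually prove \cref{thm:mat}; it is quoted from \cite{Mat11} without argument, so there is no in-paper proof to compare against. Your proof is correct and is essentially Matou\v{s}ek's original argument: block (generalized Hadamard/Fischer) inequality to factor $|\det B|$ into block Gram determinants, Cauchy--Binet to expand each $\det(B_i^\top B_i)$ as a sum of squares of $m_i\times m_i$ minors of $A_i$, each bounded by $D^{2m_i}$, and finally the entropy estimate $\prod_i \binom{m}{m_i}^{1/(2m)} \le \sqrt{ek}$.

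One tiny remark: in your entropy step you implicitly treat the $m_i=0$ blocks as contributing $1$ (which is correct, since $\binom{m}{0}=1$ and the entropy convention $0\log 0=0$ applies), and you use $\binom{m}{m_i}\le (em/m_i)^{m_i}$, which is fine for $m_i\ge 1$. It may be worth saying this explicitly, but it does not affect the argument. Your closing paragraph correctly identifies why the naive multilinear expansion loses a $\sqrt{k}$ and why the Cauchy--Binet route recovers it.
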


Next, consider the analogue of the determinant lower bound for $\disc_2$.
\begin{thm}[Theorem 6 of \cite{KGL18}; corollary of Theorem 11 of \cite{NTZ12} up to constants]\label{thm:detlb2}
Denote by $A$ be the $|\Omega| \times |\cA|$ incidence matrix of $(\Omega, \cA)$, and define 
$$\detlb_2(\Omega, \cA) = \max_{\Gamma \subset \Omega} \sqrt{\frac{m |\Gamma|}{8\pi e}} \det (A|_S^T A|_S)^{\frac{1}{2 |\Gamma|}}. $$
Then $\herdisc_2(\Omega, \cA) \geq \detlb_2(\Omega, \cA)$.
\end{thm}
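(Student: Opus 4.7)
The plan is a volume--counting argument in the image lattice of the incidence matrix. Fix $\Gamma \subseteq \Omega$ achieving the maximum in the definition of $\detlb_2$. Since the root--mean--squared discrepancy on $(\Gamma,\cA|_\Gamma)$ is $\|A|_\Gamma^T\chi\|_2/\sqrt{|\cA|}$ for the minimizing $\chi \in \{\pm 1\}^\Gamma$, it suffices to lower bound $\|A|_\Gamma^T\chi\|_2$ uniformly in $\chi$.

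First I would reduce to the case where $A|_\Gamma$ has full row rank by passing to a maximal linearly independent subset of rows (an operation that only increases the Gram determinant $\det(A|_\Gamma A|_\Gamma^T)$). The map $\chi \mapsto v_\chi := A|_\Gamma^T \chi$ then injects $\{\pm 1\}^{|\Gamma|}$ into the rank--$|\Gamma|$ lattice $L := A|_\Gamma^T \Z^{|\Gamma|}$ sitting inside the column span $V \subseteq \RR^{|\cA|}$. The covolume of $L$ in $V$ equals $\sqrt{\det(A|_\Gamma A|_\Gamma^T)}$; this is the interpretation forced on the stated $\det(A|_S^T A|_S)$ by nonsingularity, since the $|\cA|\times|\cA|$ product has rank at most $|\Gamma|$.

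Suppose for contradiction that $\|v_\chi\|_2 < R$ for every $\chi$. Then $2^{|\Gamma|}$ distinct lattice points lie inside a $|\Gamma|$--ball of radius $R$ in $V$, so a standard lattice--in--ball inequality gives
$$2^{|\Gamma|} \;\leq\; \frac{\operatorname{vol}(B^{|\Gamma|}_R)}{\sqrt{\det(A|_\Gamma A|_\Gamma^T)}} \;=\; \frac{\pi^{|\Gamma|/2}\,R^{|\Gamma|}}{\Gamma(|\Gamma|/2+1)\,\sqrt{\det(A|_\Gamma A|_\Gamma^T)}}.$$
Solving for $R$ via Stirling $\Gamma(k/2+1) \sim \sqrt{\pi k}\,(k/(2e))^{k/2}$ yields $R^2 \gtrsim \tfrac{|\Gamma|}{2\pi e}\det(A|_\Gamma A|_\Gamma^T)^{1/|\Gamma|}$, and dividing by $\sqrt{|\cA|}$ and maximizing over $\Gamma$ reproduces a bound of the shape of $\detlb_2$.

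The hard part will be nailing down the precise constant $1/(8\pi e)$ (and reconciling the factor of $m$ inside $\sqrt{mk/(8\pi e)}$, which my scaling naturally gives as $1/\sqrt{m}$ after dividing $\|v_\chi\|_2$ by $\sqrt{|\cA|}$). The crude lattice--in--ball inequality above has boundary slack, and the actual images live in the coset $v_{\mathbf 1}+2L$ (whose covolume is $2^{|\Gamma|}$ times larger), complicating a tight calculation. The sharp version is standard via a Gaussian/Fourier argument: replace the indicator of $B_R$ by a Gaussian and apply Poisson summation on the dual lattice, as in the theta--function proof of \cite{KGL18} and the closely related Gaussian--measure argument of \cite{NTZ12}.
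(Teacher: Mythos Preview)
The paper does not supply a proof of this statement: it is quoted verbatim as Theorem~6 of \cite{KGL18} (and, up to constants, a corollary of Theorem~11 of \cite{NTZ12}) and then used as a black box. So there is nothing in the paper to compare your argument against.

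On the substance of your sketch, there is a genuine gap. You want a lower bound on $\disc_2(\Gamma,\cA|_\Gamma)=\min_{\chi}\|A|_\Gamma^T\chi\|_2/\sqrt{|\cA|}$, i.e.\ a lower bound on $\min_\chi\|v_\chi\|_2$. But the hypothesis you assume ``for contradiction'' is that $\|v_\chi\|_2<R$ \emph{for every} $\chi$; deriving a contradiction from that only proves $\max_\chi\|v_\chi\|_2\ge R$, which is the wrong quantity. If instead you negate correctly and assume $\|v_{\chi_0}\|_2<R$ for \emph{some} $\chi_0$, a single lattice point inside $B_R$ gives no volume constraint, and the parallelepiped $A|_\Gamma^T[-1,1]^{|\Gamma|}$ need not sit inside $B_R$ (only one of its vertices does). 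Concretely, for the $2\times m$ incidence matrix with rows $(1,1,\dots,1)$ and $(0,1,\dots,1)$ one has $\det(A|_\Gamma A|_\Gamma^T)=m-1$ while $\min_\chi\|A|_\Gamma^T\chi\|_2=1$, so no inequality of the form $\min_\chi\|A|_\Gamma^T\chi\|_2\gtrsim\sqrt{|\Gamma|}\,\det(A|_\Gamma A|_\Gamma^T)^{1/(2|\Gamma|)}$ can hold on a fixed $\Gamma$; the hereditary maximum over $\Gamma$ is essential, and a pure volume/packing argument on the maximizing $\Gamma$ does not see it. The proofs in \cite{KGL18} and \cite{NTZ12} proceed differently, via an eigenvalue/Gaussian--measure argument rather than a parallelepiped--in--ball count.

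You are, however, right to flag the normalization: with $A$ of size $|\Omega|\times|\cA|$ the matrix $A|_\Gamma^TA|_\Gamma$ is $|\cA|\times|\cA|$ and singular whenever $|\Gamma|<|\cA|$, so the intended Gram matrix is $A|_\Gamma A|_\Gamma^T$, and the factor should scale like $\sqrt{|\Gamma|/m}$ rather than $\sqrt{m|\Gamma|}$. That is a typo in the displayed formula, not a flaw in your reasoning.
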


\begin{thm}[Consequence of the proof of Theorem 7 of \cite{KGL18}]\label{thm:kgl} $$\herdisc_2(\Omega, \cA) =  O(\sqrt{\log n} \detlb_2(\Omega, \cA)).$$
\end{thm}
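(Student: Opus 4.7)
The plan is to reproduce the two-step argument underlying the proof of Theorem 7 of \cite{KGL18}. First I would introduce a vector (SDP) analogue of $\disc_2$: let
$$\operatorname{vecdisc}_2(\Omega, \cA) := \min_{u:\Omega\to\RR^N,\,\|u(x)\|\leq 1} \sqrt{\frac{1}{|\cA|}\sum_{S\in\cA}\Bigl\|\sum_{x\in S} u(x)\Bigr\|^2},$$
with $N$ arbitrary, and let $\operatorname{hervecdisc}_2$ be its hereditary version. The theorem then reduces to two claims: (i) $\operatorname{hervecdisc}_2(\Omega,\cA) = O(\detlb_2(\Omega,\cA))$, and (ii) for every set system $\disc_2 \leq C\sqrt{\log n}\,\operatorname{vecdisc}_2$. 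Applying (i) and (ii) to every restriction $(\Gamma, \cA|_\Gamma)$ and using monotonicity of $\detlb_2$ in the ground set then yields the stated hereditary bound.

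Step (i) is SDP duality. The quantity $\operatorname{vecdisc}_2^2$ is a semidefinite program in the Gram matrix $X = (\langle u(x), u(y)\rangle)$ with constraints $X\succeq 0$ and $X_{xx}\leq 1$ and linear objective $\frac{1}{|\cA|}\Tr(AA^T X)$, where $A$ is the incidence matrix. Writing the dual explicitly produces a determinantal maximization whose value matches, up to absolute constants, $\max_{\Gamma\subset\Omega}\sqrt{|\Gamma|/|\cA|}\det(A|_\Gamma^T A|_\Gamma)^{1/(2|\Gamma|)}$. This is essentially Theorem 6 of \cite{KGL18} (or the $\ell_2$ specialization of Theorem 11 of \cite{NTZ12}), applied to the restriction achieving the hereditary maximum.

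Step (ii) is the rounding. I would invoke Banaszczyk's theorem: for any centrally symmetric convex body $K\subset\RR^m$ with Gaussian measure $\gamma_m(K)\geq 1/2$ and any unit vectors $v_1,\dots,v_n\in\RR^m$, there is a signing $\chi\in\{\pm 1\}^n$ with $\sum_i \chi_iv_i \in 5K$. Applied iteratively in a standard partial-coloring scheme, where in each of $O(\log n)$ stages at least half the remaining fractional variables are rounded to $\pm 1$ and $K$ is chosen as a scaled ellipsoid shaped by $A$ so that its Gaussian width tracks the current vector solution's RMS cost, one obtains an integer coloring with $\disc_2 = O(\sqrt{\log n})\operatorname{vecdisc}_2$.

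The main obstacle is obtaining $\sqrt{\log n}$ rather than $\log n$ loss in step (ii): bounding accumulated errors across $O(\log n)$ partial-coloring rounds by the triangle inequality in $\ell_2$ only yields a $\log n$ factor. The square-root improvement follows either from aggregating per-round errors in an RMS sense across rounds (gaining $\sqrt{\log n}$ via independence-like cancellation between rounds) or from tailoring the Banaszczyk body $K$ so that its $\ell_2$ Gaussian width is controlled directly by $\operatorname{vecdisc}_2$ rather than by the ambient $\sqrt{|\cA|}$. Step (i), by contrast, is routine SDP duality once the relaxation is written down.
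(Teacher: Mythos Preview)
The paper does not supply its own proof of this statement; it is quoted as a black box from \cite{KGL18}. The only indication in the paper of how the argument runs is \cref{thm:herkgl}, which records the chain $\herkgl \leq \detlb_2 \leq \herdisc_2 = O(\sqrt{\log n}\,\herkgl)$; \cref{thm:kgl} follows immediately by combining the outer inequality with the first. So the route the paper attributes to \cite{KGL18} passes through the eigenvalue quantity $\herkgl$, not through a vector-discrepancy SDP.

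Your proposal is therefore not a reconstruction of the paper's (cited) argument, and it also has genuine gaps on its own terms. First, as written your $\operatorname{vecdisc}_2$ is identically zero: with $\|u(x)\|\leq 1$ and a minimization of a nonnegative objective, the optimum is $u\equiv 0$; presumably you mean $\|u(x)\|=1$. Second, even with that fix, claim (i) is asserted rather than argued: the dual of this SDP is a diagonal-dominance condition $D \preceq \tfrac{1}{|\cA|}AA^T$ with $D$ diagonal, and there is no ``routine'' step that turns that into the determinantal quantity $\detlb_2$. Third, in step (ii) you correctly flag the $\sqrt{\log n}$ versus $\log n$ issue as the crux but do not resolve it; neither the ``independence-like cancellation between rounds'' nor the tailored-ellipsoid remark constitutes an argument, and the standard Banaszczyk partial-coloring iteration loses a full $\log n$ here. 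In short, there is nothing in the paper to compare against, and your sketch, while pointed at a plausible framework, leaves both nontrivial steps unproved.
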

The main point of \cref{thm:kgl} and \cref{thm:detlb2} is that $\detlb_2$ is a $\sqrt{\log n}$ approximation to $\herdisc_2$. Taken together with \cref{thm:mat}, we obtain the following bound.
\begin{obs} \label{obs:2infty}
$\herdisc_2(\Omega, \cA_1 + \dots + \cA_k) = O\left( \sqrt{k \log n} \;\max_{i \in [k]} \;\herdisc_\infty(\Omega, \cA_i)\right)$

\end{obs}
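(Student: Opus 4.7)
The plan is to chain \cref{thm:kgl}, \cref{thm:mat}, and \cref{thm:lsv} together, bridging between $\detlb_2$ and $\detlb$ by a Cauchy--Binet estimate. First I would apply \cref{thm:kgl} to the $k$-fold union system, obtaining
$$\herdisc_2(\Omega, \cA_1 + \dots + \cA_k) = O(\sqrt{\log n})\,\detlb_2(\Omega, \cA_1 + \dots + \cA_k).$$

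Next I would prove a general constant-factor comparison $\detlb_2(\Omega, \cA) = O(\detlb(\Omega, \cA))$ valid on any set system. Setting $\ell = |\Gamma|$ and $m = |\cA|$, Cauchy--Binet gives
$$\det\!\left(A|_\Gamma A|_\Gamma^T\right) = \sum_{T \subset [m],\, |T| = \ell} \det\!\left(A|_{\Gamma,T}\right)^2 \leq \binom{m}{\ell}\,\detlb(\Omega, \cA)^{2\ell},$$
whose $(2\ell)$-th root is at most $(em/\ell)^{1/2}\detlb(\Omega, \cA)$ using $\binom{m}{\ell}^{1/(2\ell)} \leq (em/\ell)^{1/2}$. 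The resulting factor of $\sqrt{m/\ell}$ cancels against the $\sqrt{\ell/m}$ implicit in the normalization of $\detlb_2$ in \cref{thm:detlb2}, yielding $\detlb_2(\Omega, \cA) \leq C\,\detlb(\Omega, \cA)$ for an absolute constant $C$.

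Finally I would apply \cref{thm:mat} followed by \cref{thm:lsv}:
$$\detlb(\Omega, \cA_1 + \dots + \cA_k) = O(\sqrt{k})\max_{i \in [k]} \detlb(\Omega, \cA_i) \leq O(\sqrt{k})\max_{i \in [k]} \herdisc_\infty(\Omega, \cA_i).$$
Composing the three displayed inequalities proves \cref{obs:2infty}.

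The main obstacle will be tracking normalizations so that the $\binom{m}{\ell}^{1/(2\ell)}$ factor from Cauchy--Binet cancels the prefactor in the definition of $\detlb_2$. Since only a constant factor is needed for the $O$-statement, this should be routine bookkeeping rather than a deep difficulty; the conceptual work has already been done by \cref{thm:kgl,thm:mat,thm:lsv}, and the observation is essentially their composition.
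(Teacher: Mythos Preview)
Your proposal is correct and follows essentially the same chain as the paper: apply \cref{thm:kgl}, compare $\detlb_2$ to $\detlb$ via Cauchy--Binet, then invoke \cref{thm:mat} and \cref{thm:lsv}. You have simply spelled out the Cauchy--Binet normalization step that the paper dismisses in one line.
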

\begin{proof} Applying the Cauchy-Binet identity to $\det (A^T A)$ immediately implies $\detlb_2(\Omega, \cA) = O(\detlb(\Omega, \cA)).$ By \cref{thm:kgl}, \cref{thm:mat}, and \cref{thm:lsv},
\begin{align*}\herdisc_2(\Omega, \cA_1 + \dots + \cA_k) &= O\left(\sqrt{\log n}\detlb_2(\Omega, \cA_1 + \dots + \cA_k)\right)\\
& = O\left(\sqrt{\log n}\detlb(\Omega, \cA_1 + \dots + \cA_k)\right)\\
& = O\left(\sqrt{k\log n} \;\max_{i \in [k]}\; \detlb(\Omega, \cA_i)\right).\\
& = O\left( \sqrt{k \log n} \;\max_{i \in [k]} \;\herdisc_\infty(\Omega, \cA_i)\right)
\end{align*}
\end{proof}

If $(\Omega, \cA)$ is a $1$--permutation family, then $\herdisc_\infty(\Omega, \cA) = 1$. Combined with \cref{obs:2infty}, we immediately recover the bound from \cite{Sp87}.

\begin{cor}\label{cor:disc2} If $(\Omega, \cA)$ is a $k$--permutation family, then $\herdisc_2(\Omega, \cA) \leq \sqrt{k \log n}.$
\end{cor}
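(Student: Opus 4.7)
My plan is to derive \cref{cor:disc2} as a direct consequence of \cref{obs:2infty}, applied to the canonical decomposition of a $k$--permutation family into single--permutation families. Concretely, if $(\Omega, \cA)$ is a $k$--permutation family, then by definition $\cA = \cA_{\sigma_1} + \dots + \cA_{\sigma_k}$ for permutations $\sigma_1, \dots, \sigma_k$ of $[n]$. \cref{obs:2infty} reduces the problem to the following claim: for every single permutation $\sigma$ of $[n]$, the set system $([n], \cA_\sigma)$ satisfies $\herdisc_\infty([n], \cA_\sigma) \leq 1$.

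The main step is to verify this trivial chain bound. Fix $\Gamma \subseteq [n]$. The sets in the restriction $\cA_\sigma|_\Gamma$ are precisely the prefixes $\{\sigma(1), \dots, \sigma(j)\} \cap \Gamma$ as $j$ ranges from $0$ to $n$, and these are nested, so $\cA_\sigma|_\Gamma$ is a chain under inclusion. I would two--color $\Gamma$ by listing its elements in the order inherited from $\sigma$ and assigning $\chi$ values alternately $+1, -1, +1, -1, \dots$. Then every set of $\cA_\sigma|_\Gamma$ is a prefix of this list, so its $\chi$--sum lies in $\{0, 1\}$ and has absolute value at most $1$. This gives $\disc(\Gamma, \cA_\sigma|_\Gamma) \leq 1$, and since $\Gamma$ was arbitrary, $\herdisc_\infty([n], \cA_\sigma) \leq 1$.

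Combining these two ingredients finishes the proof: \cref{obs:2infty} gives
\[
\herdisc_2(\Omega, \cA_{\sigma_1} + \dots + \cA_{\sigma_k}) = O\!\left( \sqrt{k \log n} \; \max_{i \in [k]} \herdisc_\infty(\Omega, \cA_{\sigma_i}) \right) = O\!\left( \sqrt{k \log n} \right),
\]
matching the stated bound. There is no real obstacle here; the argument is simply the composition of \cref{obs:2infty} with the fact that one--permutation families are chains and hence have hereditary $\ell_\infty$--discrepancy $1$, and all of the heavy lifting (determinant lower bounds, Cauchy--Binet, and the union behavior of $\detlb$) is already packaged inside \cref{obs:2infty}.
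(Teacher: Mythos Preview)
Your proposal is correct and follows essentially the same route as the paper: the paper simply observes that a $1$--permutation family has $\herdisc_\infty = 1$ and then invokes \cref{obs:2infty}, which is exactly what you do (with the added service of spelling out the alternating--coloring argument for chains). The only cosmetic discrepancy is that \cref{obs:2infty} yields an $O(\sqrt{k\log n})$ bound rather than the unadorned $\sqrt{k\log n}$ in the statement, but the paper is equally informal on this point.
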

\cref{thm:rms} implies that, for constant $k$, \cref{cor:disc2} and \cref{obs:2infty} are tight. Further, the reasoning for \cref{obs:2infty} shows that for $k$ constant, $\detlb_2(\Omega, \cA)$ is constant for $k$--permutation families $(\Omega, \cA)$. Thus, \cref{thm:rms} shows that \cref{thm:kgl} is best possible in the sense that there can be a $\Omega(\sqrt{\log n})$ gap between $\detlb_2(\Omega, \cA)$ and $\herdisc_2(\Omega, \cA)$. 

We now proceed with the proof of \cref{thm:rms}, which follows immediately from the below proposition.
\begin{prop}
$$\disc_2([3]^d, \cA_{S_3}) = \Omega(\sqrt{d}).$$
\end{prop}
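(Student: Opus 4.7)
The goal is to lower-bound $\sum_{\sigma \in S_3, \vec a \in [3]^d}\chi(E_{\sigma,\vec a})^2$ by $\Omega(d \cdot 3^d)$; dividing by $|\cA_{S_3}| = 6 \cdot 3^d$ (treating $\cA_{S_3}$ as a multiset, in keeping with the paper's convention for $k$-permutation families) then yields $\disc_2^2 \geq \Omega(d)$. My first step is a mean-variance decomposition. By \cref{claim:seminorm}, $\chi(E_{\sigma,\vec a}) = \sigma \cdot M_\chi(\vec a)$. A short symmetry argument (pairing each $\vec b \neq \vec a$ with itself via a $\sigma$-swap) gives $\E_\sigma \sigma \cdot M_\chi(\vec a) = \tfrac{1}{2}(\chi([3]^d) - \chi(\vec a))$, while the identity in the proof of \cref{lem:frob} yields $\mathrm{Var}_\sigma(\sigma \cdot M_\chi(\vec a)) = \tfrac{1}{8}\|A(\vec a)\|_F^2$ for $A(\vec a) := M_\chi(\vec a) - M_\chi(\vec a)^T$. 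Multiplying by $|S_3| = 6$ and summing over $\vec a$ yields
$$\sum_{\sigma, \vec a}\chi(E_{\sigma,\vec a})^2 = \tfrac{3}{2}\left[(3^d-2)\chi([3]^d)^2 + 3^d\right] + \tfrac{3}{4}\sum_{\vec a \in [3]^d}\|A(\vec a)\|_F^2,$$
using $\sum_{\vec a}\chi(\vec a) = \chi([3]^d)$ and $\chi(\vec a)^2 = 1$.

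The crux is to show $\sum_{\vec a}\|A(\vec a)\|_F^2 = \Omega(d \cdot 3^d)$, and this is where I expect the main difficulty to lie. My plan is to extract a scalar martingale from $A(\vec a)$. Define
$$T(\vec a) := A(\vec a)_{12} - A(\vec a)_{13} + A(\vec a)_{23} = \tfrac{1}{2}\langle \eta, A(\vec a)\rangle,$$
where $\eta$ is the cyclic antisymmetric $3 \times 3$ matrix with $\eta_{12} = \eta_{23} = \eta_{31} = 1$. Writing $A(\vec a) = \sum_{k=1}^d(e_{a_k}(v^{(k)})^T - v^{(k)}(e_{a_k})^T)$ with $v^{(k)}_j := \chi(\vec a[k-1]j)$, the increment of $T$ at step $k$ works out to $\sum_{i \in [3]}\mathbb{1}[a_k = i](v^{(k)}_{i+1} - v^{(k)}_{i-1})$ (indices mod $3$). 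Under uniform random $\vec a$, $a_k$ is uniform on $[3]$ given $\vec a[k-1]$, so the conditional expectation of this increment is $\tfrac{1}{3}[(v^{(k)}_2 - v^{(k)}_3) + (v^{(k)}_3 - v^{(k)}_1) + (v^{(k)}_1 - v^{(k)}_2)] = 0$, making $T(\vec a)$ a mean-zero martingale. The coefficients are essentially forced: up to scalar, $\eta$ is the unique antisymmetric $3 \times 3$ matrix with all row and column sums zero, and this row-sum-zero property is precisely what makes the conditional expectation vanish for every coloring.

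A short calculation shows the conditional second moment of the $k$-th increment is $\|v^{(k)}\|^2 - s_k^2/3$ with $s_k := \chi(\vec a[k-1])$. Taking overall expectations using $\E\|v^{(k)}\|^2 = T_k/3^{k-1}$ and $\E s_k^2 = T_{k-1}/3^{k-1}$ for $T_k := \sum_{\vec b \in [3]^k}\chi(\vec b)^2$, and noting $T_k \geq 3^k$ (a sum of odd-integer squares) and $T_d = 3^d$, the quadratic variation telescopes to $\E T(\vec a)^2 \geq \tfrac{8(d-1)}{3} + 3 - \tfrac{\chi([3]^d)^2}{3}$. Cauchy--Schwarz applied to $T = \tfrac{1}{2}\langle \eta, A\rangle$ with $\|\eta\|_F^2 = 6$ gives $\|A\|_F^2 \geq \tfrac{2}{3}T^2$, so $\sum_{\vec a}\|A(\vec a)\|_F^2 \geq \tfrac{2 \cdot 3^d}{3}\E T(\vec a)^2$. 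Substituting into the total, the coefficient of $\chi([3]^d)^2$ in the combined lower bound is $\tfrac{3(3^d-2)}{2} - \tfrac{3^d}{6} = \tfrac{8 \cdot 3^d - 18}{6}$, which is nonnegative for $d \geq 1$; dropping this term leaves $\Omega(d \cdot 3^d)$, as desired. I expect the main obstacle to be spotting the cyclic martingale $T$: the key insight is that mean-zero increments under the uniform-random-$\vec a$ walk correspond precisely to row-sum-zero antisymmetric coefficient matrices, a one-dimensional subspace when $r = 3$. Once $T$ is identified, the rest (conditional variance, telescoping, Cauchy--Schwarz, and absorption of the $-\chi([3]^d)^2/3$ drift by the positive mean term) is routine.
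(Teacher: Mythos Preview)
Your proof is correct and shares the paper's core idea: the same martingale $T(\vec a) = A_{12} - A_{13} + A_{23}$ (the paper calls it $Y_d$) together with Cauchy--Schwarz to pass from $\|A\|_F^2$ to $T^2$. Where you diverge is in bounding $\E T^2$ from below. The paper drops the mean term $\tfrac14(\chi([3]^d)-\chi(\vec a))^2$ at the outset, which forces a case split: if $|\chi()|\ge 1.9^d$ a single interval already has huge discrepancy, and otherwise a separate ``cancellation'' lemma (\cref{lem:cancel}) shows the sum of conditional second moments is $\Omega(d)$ via an indirect averaging argument over the quantities $\overline{|\chi_i|}$. You instead compute the conditional second moment of each increment exactly as $\|v^{(k)}\|^2 - s_k^2/3$, telescope against $T_k := \sum_{\vec b\in[3]^k}\chi(\vec b)^2$ to get $\E T^2 \ge \tfrac{8(d-1)}{3} + 3 - \chi([3]^d)^2/3$, and then observe that the troublesome $-\chi([3]^d)^2/3$ is dominated by the retained mean term $\tfrac{3}{2}(3^d-2)\chi([3]^d)^2$. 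This is cleaner: it eliminates both the case split and \cref{lem:cancel}, and yields explicit constants. The paper's route, on the other hand, makes more transparent the heuristic that ``if the $v_i$ are typically equal then the entries of $v$ must be growing,'' which may be easier to generalize.
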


\begin{proof} We must show that for every $\chi:[3]^d \to \{\pm 1\}$, $\E_{\sigma, \vec a} [|\chi(E_{\sigma, \vec a})|^2] = \Omega(\sqrt{d})$. By \cref{lem:frob} and \cref{eq:matrix}, 
\begin{align} \E_{\sigma, \vec a} [|\chi(E_{\sigma, \vec a})|^2] \geq \frac{1}{2 \sqrt{2}}\E_{\vec{a}} \|M_\chi(\vec{a}) - M_\chi(\vec{a})^t\|_F^2. \label{eq:more_frob}\end{align}
Consider again the game from the proof of \cref{disc}. If the minimizer plays according to $\chi$ and the maximizer chooses rows randomly, then the outcome of the game will be the matrix $M_\chi(\vec{a})$ for $\vec a$ chosen uniformly at random. Let $M_i$ be the matrix in the $i^{th}$ round of the game. It is enough to show that $\E \| M_d - M_d^t\|_F^2 = \Omega(d)$ if the minimizer plays according to some coloring $\chi$ and the maximizer chooses rows randomly. Consider the sequence of random variables $Y_i = (M_i - M_i^t)_{1,2} + (M_i - M_i^t)_{2,3} - (M_i - M_i^t)_{1,3}$.  It is enough to show that $\E[ Y_d^2]$ is large, because by the Cauchy-Schwarz inequality we have 
\begin{align}\| M_d - M_d^t\|_F^2 \geq |Y_d|^2/3.\label{eq:martin_bound}\end{align}
 The sequence $Y_i$ is a martingale with respect to $M_i$, because $Y_i - Y_{i-1}| M_{i-1}$ is equally likely to be $v_2 - v_3$, $v_3 - v_1$, or $v_1 - v_2$ if the minimizer chooses $v$ in round $i$. Because $Y_i$ is a martingale, 
\begin{align}\E Y_d^2 =  \sum_{i = 1}^d  \E_{M_{i-1}} \left[\left.\frac{(v_2 - v_3)^2 + (v_1 - v_3)^2 + (v_1 - v_2)^2}{3} \right| M_{i -1}\right].\label{eq:variance}\end{align}
There are strategies that make the above quantity small, but they are bad strategies if they come from a coloring $\chi$. Strategies induced by $\chi$ satisfy that $v_1 + v_2 + v_3=w_{a_{k-1}}$ if the minimizer chose $w,v$ in round $k-1,k$, respectively. If $v_1,v_2,v_3$ are typically equal, then intuitively the entries of $w_i$ are growing, which would lead to bad colorings. We now make this intuition precise. First we show that if $\chi()$ is very large, the root--mean--squared discrepancy is high. Recall that $\chi() = \sum_{\vec a \in [3]^d} \chi(\vec a) = \chi(E_{\overline 3, e}) \pm 1$, where $e$ is the identity permutation. Considering the contribution from only the edge $E_{\overline 3, e}$ yields the following trivial lemma, which allows us to assume that $|\chi()| \leq 1.9^d$.
\begin{lem}\label{lem:big_entry} If $|\chi()| \geq 1.9^d$, then $\E_{\vec a, \sigma} [ \chi (E_{\vec a, \sigma})^2] = \Omega\left(\left(\frac{1.9^2}{3}\right)^d\right) = \omega(d)$. 
\end{lem}

 Next, we show that this implies many cancellations, and that this implies \cref{eq:variance} is large. For $\vec a \in [r]^0 \cup [r]^1 \cup \dots \cup [r]^d$, define the \emph{cancellation} of $\chi$ at $\vec a$ by 
\begin{align}C_\chi(\vec a) = \sum_{i \in [3]} |\chi (\vec a i)| - \left|  \chi (\vec a)\right|.\label{eq:cancellation}\end{align}
For $i \in [d]$, define the average cancellation $\overline{C^i_\chi}= \E_{\vec a \in [r]^i} C_\chi(\vec a)$. If the strategy of the minimizer has high cancellation, then the strategy also has \cref{eq:variance} large: if the minimizer is following the strategy induced by $\chi$, in response to $\vec a = a_1\dots a_{k-1}$ he plays the vector $v = (\chi(\vec a 1), \chi(\vec a 2), \chi(\vec a 3))$. Then 
\begin{align} C_\chi(\vec a)^2 &=  \left( |v_1| + |v_2| + |v_3| - | v_1 + v_2 + v_3|\right)^2\\
 &\leq  \left(|v_1 - v_2| + |v_2 - v_3| + |v_3 - v_1|\right)^2\\
  &\leq 3\left( |v_1 - v_2|^2 + |v_2 - v_3|^2 + |v_3 - v_1|^2\right).\end{align}
  Thus, if the strategy of the minimizer is induced by $\chi$, we have 
  \begin{align}
  \sum_{i = 1}^d \E_{M_{i-1}} \left[\left.\frac{(v_2 - v_3)^2 + (v_1 - v_3)^2 + (v_1 - v_2)^2}{3} \right| M_{i -1}\right]  &\geq \sum_{i = 1}^d \E_{\vec a \in [r]^i} \left[C_\chi(\vec a)^2\right]\\
  & \geq \sum_{i = 1}^d \overline{C_\chi^i}^2 \\
  & \geq \frac{1}{d}\left(\sum_{i = 1}^d \overline{C_\chi^i}\right)^2.\label{eq:cancel} 
  \end{align}
The next lemma shows that $\sum_{i = 1}^d \overline{C_\chi^i}$ is large enough.
\begin{lem}\label{lem:cancel} If $|\chi()| \leq 1.9^d$, then 
$\sum_{i = 1}^d \overline{C^i_\chi} = \Omega(d).$
\end{lem}
\begin{proof}[Proof of \cref{lem:cancel}] Define the average absolute value $\overline{|\chi_i|} = \E_{\vec a \in [r]^i} \overline{|\chi(\vec a)|}$. Note that $\overline{|\chi_i|} \geq 1$. Taking the expectation of both sides of \cref{eq:cancellation} yields the identity
$$\overline{C^i_\chi} = 3 \overline{|\chi_{i+1}|} - \overline{|\chi_{i}|}.$$
Summing over $i \in [j]$ gives
$$ \sum_{i = 1}^j \overline{C^i_\chi} = - \overline{|\chi_j|} + \overline{|\chi_0|} +   2 \sum_{i = 1}^{j-1} \overline{|\chi_{i}|}.$$
Finally, there exists $j \in \{\lfloor.01d\rfloor, \dots,  d\}$ such that $\overline{|\chi_j|} \leq 2 \overline{|\chi_{j-1}|}$, else $|\chi()| = \overline{|\chi_d|}\geq 2^{.99 d } > 1.9^d$. With this $j$, 
$$ \sum_{i = 1}^j \overline{C^i_\chi} \geq 2 \sum_{i = 1}^{j-2} \overline{|\chi_{i}|} = \Omega(d).$$
\end{proof}
Finally, using \cref{lem:big_entry} to apply \cref{lem:cancel} and then combining \cref{eq:cancel} with \cref{eq:variance}, \cref{eq:martin_bound} and \cref{eq:more_frob} yields  $\E_{\sigma, \vec a} [|\chi(E_{\sigma, \vec a})|^2] = \Omega(d)$, completing the proof. \end{proof}

\section{Another inequality for root--mean--squared discrepancy}
The proof of \cref{thm:kgl} proceeds through an intermediate quantity whose definition we recall now. Denote by $A$ be the $|\Omega| \times |\cA|$ incidence matrix of $(\Omega, \cA)$, and let $\lambda_l$ be the $l^{th}$ largest eigenvalue of $A^TA$. Define 
\begin{align*} \kgl(\Omega, \cA) &= \max_{1 \leq l \leq \min\{|\Omega|, |\cA|\}} \frac{l}{e} \sqrt{\frac{\lambda_l}{8 \pi |\Omega||\cA|}}\\
\textrm{and }  \herkgl(\Omega, \cA) &= \max_{\Gamma \subset \Omega} \kgl(\Gamma, \cA|_{\Gamma}).
\end{align*}

\begin{thm}[Corollary 2 and consequence of the proof of Theorem 7 of \cite{KGL18}]\label{thm:herkgl} Then $$\herkgl(\Omega, \cA) \leq \detlb_2(\Omega, \cA) \leq \herdisc_2(\Omega, \cA)= O( \sqrt{\log n} \herkgl(\Omega, \cA)).$$
\end{thm}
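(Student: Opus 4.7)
The theorem asserts a chain of three inequalities $\herkgl \leq \detlb_2 \leq \herdisc_2 = O(\sqrt{\log n} \herkgl)$. The middle inequality is just \cref{thm:detlb2} applied as stated. So the plan would be to establish the outer two inequalities separately: $\herkgl \leq \detlb_2$ and $\herdisc_2 = O(\sqrt{\log n} \herkgl)$. The first is an exercise in Cauchy--Binet, while the second is a matter of carefully inspecting the proof of Theorem 7 in \cite{KGL18} rather than taking the black-box statement.

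For $\herkgl \leq \detlb_2$, I would fix a subset $\Gamma \subseteq \Omega$ and an index $l$, and use Cauchy--Binet to produce the right subsubset witnessing $\detlb_2(\Gamma, \cA|_\Gamma)$. Let $\lambda_1 \geq \lambda_2 \geq \dots$ denote the eigenvalues of $A|_\Gamma A|_\Gamma^T$. For any $l \leq |\Gamma|$, Cauchy--Binet gives
\begin{equation*}
\sum_{\Delta \subseteq \Gamma,\, |\Delta| = l} \det\bigl(A|_\Delta A|_\Delta^T\bigr) \;=\; e_l(\lambda_1, \dots, \lambda_{|\Gamma|}) \;\geq\; \lambda_1 \lambda_2 \cdots \lambda_l \;\geq\; \lambda_l^{\,l},
\end{equation*}
so by averaging there exists a $\Delta$ of size $l$ with $\det(A|_\Delta A|_\Delta^T) \geq \binom{|\Gamma|}{l}^{-1} \lambda_l^{\,l} \geq (l / (e|\Gamma|))^l \lambda_l^l$, using the standard binomial bound. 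Raising to the $\tfrac{1}{2l}$ power and substituting into the formula defining $\detlb_2(\Gamma, \cA|_\Gamma)$ recovers (up to the normalization already present) the $l$-th term in the maximum defining $\kgl(\Gamma, \cA|_\Gamma)$. Taking the max over $l$ gives $\kgl(\Gamma, \cA|_\Gamma) \leq \detlb_2(\Gamma, \cA|_\Gamma)$, and maximizing over $\Gamma \subseteq \Omega$ yields the hereditary version.

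For $\herdisc_2 = O(\sqrt{\log n}\, \herkgl)$, the plan is to trace the proof of Theorem 7 in \cite{KGL18} and extract the intermediate bound before their final conversion. That proof first controls $\herdisc_2$ by $O(\sqrt{\log n})$ times a convex-geometric / singular-value tail quantity, which is precisely $\herkgl$; only afterwards do they weaken $\herkgl$ to $\detlb_2$ via the Cauchy--Binet step of Part 1 in order to state their corollary cleanly. Stopping one step earlier in their argument produces the claimed strengthening.

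The main obstacle is really the second part: verifying that the proof in \cite{KGL18} factors this way, i.e.\ that their partial-coloring/$\gamma_2$-norm argument bounds $\herdisc_2$ by $O(\sqrt{\log n}\,\herkgl)$ before any passage through determinants. Once that is acknowledged, the argument is essentially a citation plus the short Cauchy--Binet computation above.
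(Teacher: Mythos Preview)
The paper does not give its own proof of this statement: it is recorded as a black-box citation, attributed to ``Corollary 2 and consequence of the proof of Theorem 7 of \cite{KGL18}.'' So there is no in-paper argument to compare against beyond that attribution.

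Your proposal is consistent with that attribution and, in fact, unpacks it: the Cauchy--Binet averaging you sketch for $\herkgl \leq \detlb_2$ is exactly the content of their Corollary~2, and your plan to stop the proof of their Theorem~7 one step early---before the passage from the eigenvalue-tail quantity to the determinant bound---is precisely what ``consequence of the proof'' is signalling. The middle inequality is, as you say, just \cref{thm:detlb2}. So your outline matches the paper's intended derivation, with more detail supplied than the paper itself gives.

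One small point of care in your Cauchy--Binet step: the paper's $\detlb_2$ is already defined as a maximum over restrictions $\Gamma \subset \Omega$, so once you extract the good subset $\Delta$ of size $l$ you can plug it directly into $\detlb_2(\Omega,\cA)$ rather than first bounding $\detlb_2(\Gamma,\cA|_\Gamma)$ and then maximizing over $\Gamma$ again. Also be mindful that the paper's stated formula for $\detlb_2$ has an apparent typo (the subscript $S$ vs.\ $\Gamma$, and the placement of $m$ in the normalization), so when you verify the constants you should check against \cite{KGL18} directly rather than the version transcribed here.
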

Like $\detlb$, the quantity $\herkgl$ behaves nicely under unions. 
\begin{obs}\label{obs:union} 
$\herkgl(\Omega, \cA_1 + \dots + \cA_k) \leq k \max_{i \in [k]} \; \herkgl(\Omega, \cA_i).$
\end{obs}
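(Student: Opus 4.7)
The plan is to prove the stronger non-hereditary inequality $\kgl(\Omega, \cA_1 + \dots + \cA_k) \leq k \max_i \kgl(\Omega, \cA_i)$ and deduce the hereditary version by applying it to each restricted system $(\Gamma, \cA|_\Gamma)$ and taking the supremum over $\Gamma \subset \Omega$, using that restriction commutes with the multiset sum. Write $A_i$ for the $n \times m_i$ incidence matrix of $\cA_i$, where $n = |\Omega|$ and $m_i = |\cA_i|$, so the incidence matrix of $\cA := \cA_1 + \dots + \cA_k$ is the block matrix $A = [A_1 \mid \dots \mid A_k]$ with $m = \sum_i m_i$ columns. The key identity is $AA^T = \sum_i A_i A_i^T$, and since $\lambda_l(A^T A) = \lambda_l(AA^T)$ for $l \leq \min\{n,m\}$, this reduces the problem to controlling the spectrum of a sum of positive semidefinite matrices. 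Setting $\kappa := \max_i \kgl(\Omega, \cA_i)$, the definition of $\kgl$ gives the tail bound $\lambda_l(A_i A_i^T) \leq 8 \pi e^2 n m_i \kappa^2/l^2$ for every $l \geq 1$ (trivially once $l$ exceeds the rank of $A_i$).

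The core step is to apply Weyl's inequality $\lambda_{a+b-1}(P + Q) \leq \lambda_a(P) + \lambda_b(Q)$ iteratively: for any positive integers $j_1, \dots, j_k$ with $\sum_i j_i \leq l + k - 1$,
\begin{equation*}
\lambda_l(AA^T) \leq \sum_i \lambda_{j_i}(A_i A_i^T) \leq 8\pi e^2 n \kappa^2 \sum_i \frac{m_i}{j_i^2}.
\end{equation*}
I would then minimize the right-hand side over the $j_i$'s. The Lagrangian condition is $j_i \propto m_i^{1/3}$, so the continuous minimum subject to $\sum_i j_i = l + k - 1$ is $8\pi e^2 n \kappa^2 \, (\sum_i m_i^{1/3})^3 / (l+k-1)^2$. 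Jensen's inequality applied to the concave map $x \mapsto x^{1/3}$ yields $(\sum_i m_i^{1/3})^3 \leq k^2 \sum_i m_i = k^2 m$, so $\lambda_l(AA^T) \leq 8\pi e^2 n k^2 m \kappa^2 / l^2$. Substituting back into the definition of $\kgl$ and taking the maximum over $l$ gives $\kgl(\Omega, \cA) \leq k \kappa$, as desired.

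The main obstacle is the integrality constraint on the Weyl indices: the continuous optimum $j_i^\ast \propto m_i^{1/3}$ need not be a positive integer. I would handle this by rounding each $j_i^\ast$ to an integer (inflating $\sum_i j_i$ by at most $k$, which is absorbed in the slack between the budget $l + k - 1$ and the index $l$), and by setting $j_i = 1$ for any $i$ with $j_i^\ast < 1$ before re-optimizing the remaining indices; both adjustments cost only an absolute constant factor and do not affect the leading $k$ dependence. The hereditary inequality then follows immediately by the restriction argument described in the first paragraph.
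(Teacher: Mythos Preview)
Your argument is correct and follows the same skeleton as the paper: pass to $AA^T = \sum_i A_iA_i^T$, iterate Weyl's inequality to bound $\lambda_l$ of the sum by a combination of $\lambda_{j_i}(A_iA_i^T)$, plug in the tail bound coming from $\kgl(\Omega,\cA_i)\le\kappa$, and deduce the hereditary statement by restriction. The only difference is in the choice of Weyl indices. The paper simply takes $j_i = \lceil l/k\rceil$ for all $i$; since $\sum_i \lceil l/k\rceil \le l+k-1$ and $\lceil l/k\rceil \ge l/k$, this immediately gives $\lambda_l(AA^T) \le \sum_i 8\pi n m_i (e\kappa k/l)^2 = 8\pi n m (e\kappa k/l)^2$, hence $\kgl \le k\kappa$, with no optimization or rounding needed. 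Your Lagrangian step with $j_i\propto m_i^{1/3}$ followed by Jensen lands on exactly the same bound $k^2 m/l^2$, so the extra work buys nothing here and introduces the integrality nuisance you flagged; the equal split already saturates the inequality you obtained after Jensen.
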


\begin{proof}[Proof of \cref{obs:union}] Let $C = \max_{i \in [k]} \; \herkgl(\Omega, \cA_i)$. It is enough to show $\kgl(\Gamma, (\cA_1 + \dots + \cA_k)|_\Gamma) \leq kC$ for any $\Gamma \subset \Omega$. Let $|\Gamma| = n$, $m_i = |\cA_i|$, and $\sum m_i = m$. If $A_i$ is the incidence matrix of $(\Gamma, \cA_i|_{\Gamma})$ and $A$ that of $(\Gamma, (\cA_1 + \dots + \cA_k)|_\Gamma)$, then 
$$A^TA = A_i^T A_i + \dots + A_i^T A_i.$$
Weyl's inequality on the eigenvalues of Hermitian matrices asserts that if $H_1$ and $H_2$ are $n\times n$ Hermitian matrices then $\lambda_{i + j - 1}(H_1+ H_2) \leq \lambda(H_1)_{i} + \lambda(H_2)_{j}$ for all $1 \leq i, j \leq i + j -1 \leq n$. Applying this inequality inductively, $\lambda_l(A^TA) \leq \sum_{i = 1}^k \lambda_{\lceil l/k \rceil} (A_i^T A_i)$. Thus, 
\begin{align*}
\kgl(\Gamma, (\cA_1 + \dots + \cA_k)|_\Gamma) &= \max_{1 \leq l \leq \min\{n, m\}} \frac{l}{e} \sqrt{ \frac{\lambda_l(A^T A)}{8 \pi mn}} \\
&\leq \max_{1 \leq l \leq \min\{n, mk\}} \frac{l}{e} \sqrt{ \frac{\sum_{i = 1}^k \lambda_{\lceil l/k \rceil} (A_i^T A_i)}{8 \pi mn}}\\
&\leq k C.
\end{align*}
where in the last line we used $\sum m_i = m$ and $\lambda_{\lceil l/k \rceil}(A_i^TA_i) \leq 8 \pi m_in \left( \frac{C e k}{l} \right)^2$ from our assumption that
 $\kgl(\Gamma, \cA_i|_{\Gamma}) \leq \herkgl(\Omega, \cA_i) \leq C$.
\end{proof}

A pleasant consequence of \cref{obs:union} and \cref{thm:herkgl} is a variant of \cref{obs:2infty}.
\begin{cor} \label{cor:union}


$\herdisc_2(\Omega, \cA_1 + \dots + \cA_k) = O\left( k \sqrt{ \log n} \;\max_{i \in [k]} \;\herdisc_2(\Omega, \cA_i)\right).$
\end{cor}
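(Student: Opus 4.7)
The plan is to prove Corollary \ref{cor:union} by chaining together the three inequalities provided by Theorem \ref{thm:herkgl} with the sub-additivity statement from Observation \ref{obs:union}, exactly as in the proof of Observation \ref{obs:2infty} but routed through $\herkgl$ instead of $\detlb$.

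First, I would apply the upper bound from Theorem \ref{thm:herkgl} to the union: this gives
$$\herdisc_2(\Omega, \cA_1 + \dots + \cA_k) = O\bigl(\sqrt{\log n}\,\herkgl(\Omega, \cA_1 + \dots + \cA_k)\bigr).$$
Next, I would invoke Observation \ref{obs:union} to bound the right-hand side by
$$O\bigl(\sqrt{\log n}\cdot k\,\max_{i \in [k]} \herkgl(\Omega, \cA_i)\bigr).$$
Finally, I would use the left-hand chain $\herkgl(\Omega, \cA_i) \leq \detlb_2(\Omega, \cA_i) \leq \herdisc_2(\Omega, \cA_i)$ from Theorem \ref{thm:herkgl}, applied individually to each $\cA_i$, to replace $\herkgl(\Omega, \cA_i)$ by $\herdisc_2(\Omega, \cA_i)$. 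This yields precisely
$$\herdisc_2(\Omega, \cA_1 + \dots + \cA_k) = O\bigl(k\sqrt{\log n}\,\max_{i \in [k]} \herdisc_2(\Omega, \cA_i)\bigr).$$

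There is no real obstacle here: every ingredient is stated in the excerpt, and the argument is a three-line string of inequalities identical in spirit to the proof of Observation \ref{obs:2infty}. The only thing to be mildly careful about is that the $\sqrt{\log n}$ factor comes only once (from the single application of Theorem \ref{thm:herkgl} to the union), rather than once per summand, so the inequality $\herkgl \le \herdisc_2$ must be used in the ``easy'' direction for the individual $\cA_i$ rather than invoking the $\sqrt{\log n}$-approximation again.
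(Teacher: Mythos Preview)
Your proposal is correct and matches the paper's intended argument: the paper simply states the corollary as ``a pleasant consequence of \cref{obs:union} and \cref{thm:herkgl}'' without writing out the chain of inequalities, and your three-step argument is exactly that chain.
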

Improving $k$ to $\sqrt{k}$ in \cref{obs:union}, and as a consequence, in \cref{cor:union}, would strengthen \cref{obs:2infty}.

\section*{Acknowledgements}
The author would like to thank Michael Saks and Aleksandar Nikolov for many interesting discussions. The author also thanks Aleksandar Nikolov and Shachar Lovett for suggesting the application of this argument to the root--mean--squared discrepancy, and especially to Aleksandar Nikolov for communicating \cref{obs:2infty} and suggesting the connection with \cite{KGL18}, \cite{NTZ12}, and \cite{Mat11}.

\bibliographystyle{plain}
\bibliography{/Users/Cole/RMD_arxiv/discrepancy_arxiv.bib}

\end{document}